\documentclass[a4paper,11pt]{article}
\usepackage[T1]{fontenc}
\usepackage{lmodern}
\usepackage{amsmath,amsthm,amssymb,amsfonts}
\usepackage[dvipsnames,svgnames,table]{xcolor}
\usepackage{pgf,tikz}
\usetikzlibrary{calc}
\usetikzlibrary{arrows.meta}
\usepackage{enumitem}
\usepackage{fullpage}
\usepackage{mathtools}
\usepackage{nicematrix}
\usepackage{booktabs}
\usepackage[title]{appendix}
\usepackage{thmtools, thm-restate}
\usepackage[colorlinks=true,linkcolor=RoyalBlue,urlcolor=RoyalBlue,citecolor=PineGreen]{hyperref}
\usepackage[nameinlink]{cleveref}
\usepackage{verbatim}

\newtheorem{theorem}{Theorem}[section]
\newtheorem{proposition}[theorem]{Proposition}
\newtheorem{lemma}[theorem]{Lemma}
\newtheorem{corollary}[theorem]{Corollary}
\newtheorem{conjecture}[theorem]{Conjecture}

\theoremstyle{definition}

\theoremstyle{remark}
\newtheorem{remark}[theorem]{Remark}

\newcommand{\suchthat}{\;\ifnum\currentgrouptype=16 \middle\fi|\;}

\DeclareMathOperator{\rank}{rank}

\DeclareMathOperator{\Circ}{Circ}

\colorlet{colbg}{white}
\colorlet{colfg}{black}
\colorlet{colgraphv}{colfg!75!white}
\colorlet{colgraphe}{colfg!55!white}
\colorlet{colG}{DarkSeaGreen}
\definecolor{colR}{HTML}{CC6677}
\definecolor{colO}{HTML}{DDCC77}
\definecolor{colB}{HTML}{6699CC}
\colorlet{colY}{Gold!90!black}
\definecolor{myorange}{RGB}{239,128,108}
\definecolor{myblue}{RGB}{154,210,201}
\definecolor{mypurple}{RGB}{117,0,235}

\tikzstyle{vertex}=[fill=colgraphv,circle,inner sep=0pt, minimum size=4pt]
\tikzstyle{edge}=[line width=1.5pt,colgraphe]
\tikzstyle{dedge}=[edge, -{LaTeX[round,length=8pt]}]
\tikzstyle{oedge}=[line width=1.5pt,myorange]
\tikzstyle{bedge}=[line width=1.5pt,myblue]
\tikzstyle{pedge}=[line width=1.5pt,mypurple]

\tikzstyle{labelsty}=[font=\scriptsize]

\title{Constructing pathological symmetric abstract rigidity matroids}

\author{
Sean Dewar\thanks{Department of Computer Science, KU Leuven}
\and
Joannes Vermant\thanks{Department of Mathematics and Mathematical Statistics, Ume{\aa}  University}
}
\date{}

\begin{document}

\maketitle

\begin{abstract}
We prove that for each $d \geq 2$, there exist infinitely many symmetric abstract $d$-rigidity matroids that are not the $d$-rigidity matroid, the $d$-hyperconnectivity matroid, or the $d$-cofactor matroid.
When $d=2$, our constructed matroids are pathological, in the sense that they are not contained within any abstract 2-rigidity matroid family.
Our construction additionally provides a counter-example to a conjecture of Tyomkyn.
\end{abstract}

\section{Introduction}

\subsection{Background}

All graphs throughout the paper are assumed to be finite and simple;
i.e.~finitely many vertices, no loops and no parallel edges (and hence also finitely many edges).

Let $K_n$ be the complete graph with vertex set $[n] := \{1,\ldots,n\}$ and edge set $\binom{[n]}{2}$.
Given a positive integer $d \leq n$,
a matroid $M$ with ground set $\binom{[n]}{2}$ is an \emph{abstract $d$-rigidity matroid on $K_n$} if it satisfies the following two conditions:
\begin{enumerate}
    \item the rank of $M$ is $dn - \binom{d+1}{2}$; and
    \item every copy of $K_{d+2}$ in $\binom{[n]}{2}$ is a circuit of $M$.
\end{enumerate}
Abstract rigidity matroids were first introduced by Graver \cite{graver1991}, albeit with a slightly different definition via matroid closure;
here we have opted for the simpler definition of abstract rigidity matroids that was shown to be equivalent by Nguyen \cite{nguyen2010}.
We say that an abstract $d$-rigidity matroid $M$ on $K_n$ is \emph{symmetric} if for every pair of isomorphic graphs $G=([n],E)$ and $G'=([n],E')$, the edge set $E$ is independent in $M$ if and only if the edge set $E'$ is independent in $M$.

The unique abstract 1-rigidity matroid on $K_n$ is the graphical matroid of $K_n$ (see, \cite[pg.~357]{graver1991}),
but for $d \geq 2$ there are many different abstract $d$-rigidity matroids.
There are three important symmetric abstract $d$-rigidity matroids that exist on any $K_n$: the \emph{$d$-rigidity matroid} $\mathcal{R}_{d,n}$;
the \emph{$d$-hyperconnectivity matroid} $\mathcal{H}_{d,n}$; and 
the \emph{$d$-cofactor matroid} $\mathcal{C}_{d,n}$.
See \Cref{sec:prelim} for exact definitions of each family of matroids and their various properties.
The matroid $\mathcal{H}_{d,n}$ is distinct from both $\mathcal{R}_{d,n}$ and $\mathcal{C}_{d,n}$ for all $d \geq 2$ and $n \geq 2d+2$, and matroids $\mathcal{R}_{d,n}$ and $\mathcal{C}_{d,n}$ are distinct for all $d \geq 4$ and $n \geq 2d+4$.
However, $\mathcal{R}_{2,n} = \mathcal{C}_{2,n}$ for all values of $n$, and it is open whether or not $\mathcal{R}_{3,n} = \mathcal{C}_{3,n}$ for all values of $n$ also.
Each family $\mathcal{R}_d := (\mathcal{R}_{d,n})_{n \geq d}$, $\mathcal{H}_d := (\mathcal{H}_{d,n})_{n \geq d}$ and $\mathcal{C}_d := (\mathcal{C}_{d,n})_{n \geq d}$ forms an \emph{abstract $d$-rigidity matroid family};
a sequence $(M_n)_{n \geq N}$ of symmetric abstract $d$-rigidity matroids, here with $M_n$ on $K_n$, such that the restriction of $M_n$ to any copy of $K_{n-1}$ is isomorphic to $M_{n-1}$.

It is currently open whether there exist other abstract $d$-rigidity matroid families distinct from $\mathcal{R}_d$, $\mathcal{H}_d$ and $\mathcal{C}_d$.
In \cite[\S 6]{tyomkyn2026planerigiditymatroids}, Tyomkyn observed one method for constructing another symmetric abstract 2-rigidity matroid on $K_6$ distinct from the matroids $\mathcal{R}_{2,6}, \mathcal{H}_{2,6}, \mathcal{C}_{2,6}$:
take the matroid $\mathcal{R}_{2,6}$ and delete every copy of the triangular prism (see \Cref{fig:3prism}) from the set of bases.
He could not, however, find a 7-vertex graph that is a basis of $\mathcal{R}_{2,7}$ where this trick worked.
Because of this, he conjectured the following:

\begin{figure}
    \centering
    \begin{tikzpicture}[scale=0.8]
			\node[vertex, label=left:{$b$}] (11) at (-1,0) {};
			\node[vertex, label=left:{$a$}] (21) at (-2,1) {};
			\node[vertex, label=left:{$c$}] (31) at (-2,-1) {};
			
			\node[vertex, label=right:{$b'$}] (12) at (1,0) {};
			\node[vertex, label=right:{$a'$}] (22) at (2,1) {};
			\node[vertex, label=right:{$c'$}] (32) at (2,-1) {};
			
			\draw[edge] (11)edge(12);
			\draw[edge] (21)edge(22);
			\draw[edge] (31)edge(32);
			
			\draw[edge] (11)edge(21);
			\draw[edge] (21)edge(31);
			\draw[edge] (31)edge(11);
			
			\draw[edge] (12)edge(22);
			\draw[edge] (22)edge(32);
			\draw[edge] (32)edge(12);
		\end{tikzpicture}
    \caption{The triangular prism graph.}
    \label{fig:3prism}
\end{figure}

\begin{conjecture}[{\cite[Conj.~6.1]{tyomkyn2026planerigiditymatroids}}]\label{conj:tyomkyn}
    There exists a positive integer $N$ so that for every $n \geq N$, the only symmetric abstract 2-rigidity matroid on $K_n$ where every copy of $K_{3,3}$ is independent is $\mathcal{R}_{2,n}$.
\end{conjecture}

\subsection{Our contribution}

In this paper we disprove \Cref{conj:tyomkyn} by generalising Tyomkyn's construction for higher values of $d$ and $n$.
To do so, we introduce the following terminology.
For a connected graph $G=([n],E)$ and an abstract $d$-rigidity matroid $N$ on $K_n$,
we set 
\begin{equation*}
    \Circ_{G}(N) := \big\{ \text{circuits of $N$ not containing copy of $E$} \big\} \cup \big\{ \text{copies of $E$} \big\},
\end{equation*}
and we set $\mathcal{M}_G(N) := (\binom{[n]}{2},\Circ_{G}(N))$.
If $E$ is a basis of $N$ and $\Circ_{G}(N)$ is the circuit set for a matroid,
then (using the circuit axiom definition) $\mathcal{M}_G(N)$ is an abstract $d$-rigidity matroid, and the restriction of $\mathcal{M}_G(N)$ to any non-spanning subgraph $H$ agrees with the restriction of $N$ to $H$;
moreover, if $N$ is also symmetric, then so too is $\mathcal{M}_G(N)$.

The main result of the paper is the following.

\begin{theorem}\label{mainthm}
    For all integers $d \geq 2$ and $n \geq d +11$,
    there exists a graph $G=([n],E)$ for which $E$ is a basis of $\mathcal{R}_{d,n}$ and $\mathcal{M}_G(\mathcal{R}_{d,n})$ is a symmetric abstract $d$-rigidity matroid.
\end{theorem}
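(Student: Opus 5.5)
The plan is to reduce \Cref{mainthm} to a purely combinatorial condition on $G$, find a two-dimensional graph satisfying it for every $m\ge 13$, and lift that graph to dimension $d$ by coning.

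\textbf{Step 1: a criterion for $\Circ_G(N)$ to be a circuit set.} Let $E$ be a basis of $N$, and let $C_N(E',f)$ denote the fundamental circuit of $f$ with respect to a basis $E'$. I will show that $\Circ_G(N)$ satisfies the circuit axioms provided the following holds:
\[
(\ast)\quad \text{for every copy } E' \text{ of } E,\ f\notin E',\ x\in C_N(E',f)\setminus\{f\}, \text{ the set } E'+f-x \text{ is again a copy of } E.
\]
I will check the axioms directly.
\begin{enumerate}
\item Incomparability. Copies of $E$ are bases of $N$, so no circuit of $N$ properly contains one. The circuits of $N$ retained in $\Circ_G(N)$ contain no copy, by definition.
\item Elimination, for $C_1,C_2$ sharing an element $x$. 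The set $C_1\cup C_2-x$ is dependent in $N$ in almost all cases, and is then done. Indeed, it contains either a circuit of $N$ or a copy of $E$, and both of these lie in $\Circ_G(N)$. Note that a circuit of $N$ that contains a copy of $E$ still contains a member of $\Circ_G(N)$.
\begin{itemize}
\item If both $C_i$ are circuits of $N$, dependence is the usual elimination axiom.
\item If both $C_i$ are copies, then $|C_1\cup C_2-x|>\rank N$, so the set is dependent.
\item If $C_1=E'$ is a copy and $C_2$ is a circuit of $N$ containing no copy, the set is dependent unless $|C_2\setminus E'|=1$. In that case $C_2=C_N(E',f)$ and the set equals $E'+f-x$, which is a basis of $N$. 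This is exactly where $(\ast)$ is needed: it makes $E'+f-x$ a copy, hence a member of $\Circ_G(N)$.
\end{itemize}
\end{enumerate}
The remaining claims are then immediate from the discussion preceding \Cref{mainthm}: that $\mathcal{M}_G(N)$ is a symmetric abstract $d$-rigidity matroid, and that it agrees with $N$ on non-spanning subgraphs. (Copies of $K_{d+2}$ are not copies of $E$, so they remain circuits.)

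Since $N=\mathcal{R}_{d,n}$ is symmetric, it suffices to verify $(\ast)$ for $E'=E$ itself. In rigidity language the condition reads: whenever $e\in E$ and $f\notin E$ are such that $G-e+f$ is minimally $d$-rigid, we have $G-e+f\cong G$.

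\textbf{Step 2: reduce to $d=2$ by coning.} Let $G_2$ be a minimally $2$-rigid graph on $m=n-d+2$ vertices satisfying $(\ast)$. Let $G$ be obtained by joining $G_2$ with a complete graph $K_{d-2}$ on new vertices. By Whiteley's coning theorem, $H$ is minimally $2$-rigid iff $H * K_{d-2}$ is minimally $d$-rigid. Every edge touching the apex set is present in $G$, so every non-edge $f$ of $G$ lies inside $[m]$.
\begin{itemize}
\item If $e$ also lies inside $[m]$, coning reduces rigidity of $G-e+f$ to rigidity of $G_2-e+f$, and $(\ast)$ transfers.
\item If $e$ is an edge touching the apex set, I must show that $G-e+f$ is never rigid. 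I expect this to follow from the fact that the edges inside $[m]$ form the $2$-dimensional basis $G_2$, so $G_2+f$ contains a dependent set within the induced $2$-dimensional structure. This would give a dependence of $G-e+f$ in $\mathcal{R}_{d,n}$, and hence it is not a basis.
\end{itemize}
The bound $n\ge d+11$ then corresponds to $m\ge 13$.

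\textbf{Step 3 (the main obstacle): planar graphs satisfying $(\ast)$ for every $m\ge 13$.} This is the genuinely hard part. Tyomkyn's prism shows such graphs exist at $m=6$, but not by any general mechanism. My first attempt would be a highly symmetric Laman graph built cyclically, for example rigid blocks (triangles, or $K_4$ minus an edge) arranged around a cycle with connecting bars, possibly with a small extra gadget to handle all residues of $m$.

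The aim is that the automorphism group acts transitively on edges within each orbit. Deleting any edge $e$ then leaves a flexible graph whose rigid components are explicitly computable by Laman counts. I would then verify that the non-edges $f$ joining two distinct rigid components in a rigidifying way are exactly those giving a rotated or reflected copy of $G_2$.

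The verification is a case analysis over the edge orbits and over the possible pairs of rigid components. I expect the threshold $13$ to come from needing enough blocks that no non-edge $f$ can create a spurious rigid subgraph. Failing a uniform family, I would look for constructions for small $m$ computationally and then find an inductive operation, such as inserting a block, that preserves $(\ast)$.
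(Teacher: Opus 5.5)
\textbf{There is a genuine gap: the criterion $(\ast)$ can never hold here.} For $f\notin E'$, the set $E'+f-x$ is a basis of $N$ exactly when $x\in C_N(E',f)$. So $(\ast)$ says that the copies of $E$ are closed under single basis exchanges. The basis exchange graph of any matroid is connected, so $(\ast)$ forces \emph{every} basis of $\mathcal{R}_{d,n}$ to be isomorphic to $G$. That is false for $n\ge d+11$: iterated cones of a fan and of a triangle-free Laman graph on the same vertex set are non-isomorphic bases.

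Even if $(\ast)$ held, $\Circ_G(N)$ would be exactly the circuit set of the truncation of $N$. That matroid has rank $\rank N-1$, so the rank condition is not ``immediate''. Your Step~3 is therefore searching for an object that does not exist: your rigidity reformulation would make every Laman graph on $m$ vertices isomorphic to $G_2$. Tyomkyn's prism is not an instance of $(\ast)$ either. It works for the opposite reason: it is cubic, so no single swap $G-e+f$ is isomorphic to it. Separately, your two-copies case is wrong as written. If $C_2=C_1-e'+f'$, then $|C_1\cup C_2-x|=\rank N$, not more.

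\textbf{The missing idea is to make your problematic case vacuous rather than satisfy it.} The paper imposes two conditions.
(a) $E\cup\{f\}$ is a circuit of $N$ for every non-edge $f$. Then the fundamental circuit of a copy $E'$ and $f$ is all of $E'+f$, which contains a copy and is discarded. So no member of $\Circ_G(N)$ differs from a copy in exactly one element.
(b) $G-e+f\not\cong G$ for all $e\in E$, $f\notin E$, which is the negation of your condition. This excludes two copies with $|C_1\cup C_2|=\rank N+1$.
The rank is then certified by the graph of Lemma~\ref{lem:extendable_graph}: adding one edge to it creates a $K_{d+2}$, so by (a) it is not a copy.

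Condition (a) also breaks your Step~2. If $G_2+f$ is a spanning $\mathcal{R}_2$-circuit, then by Lemma~\ref{lem:rigidcone2} its join with $K_{d-2}$ is a spanning $\mathcal{R}_d$-circuit. Hence $G-e+f$ \emph{is} a basis even when $e$ meets an apex. Your argument fails because a planar dependence only cones up if all apex edges at its vertices are present. The paper instead assumes the planar graph has no vertex of degree $m-2$. Then $G$ has no vertex of degree $n-2$, while the apex endpoint of $e$ has degree $n-2$ in $G-e+f$, so the degree sequences differ.

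Finally, the substance of the theorem is the explicit planar families, which your proposal does not supply:
\begin{itemize}
\item For even $m\ge 10$, bipartite MIGs grown by a double $1$-extension. This preserves ``exactly two maximum-degree vertices, one on each side, and no two degrees differing by exactly $1$''.
\item For odd $m\ge 13$, the explicit graphs $G_{2k+1}$, with the no-proper-rigid-subgraph and non-isomorphism checks done by hand.
\end{itemize}
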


If $\mathcal{M}_G(\mathcal{R}_{d,n})$ is a matroid, then its restriction to $K_{n-1}$ is equal to the restriction of $\mathcal{R}_{d,n}$ to $K_{n-1}$. Thus it follows from \Cref{mainthm} that \Cref{conj:tyomkyn} is false.
\Cref{mainthm} also implies the following corollary.

\begin{corollary}\label{maincor}
    For each $d \geq 2$ and $n \geq 2d+9$, there exists a symmetric abstract $d$-dimensional rigidity matroid on $K_n$ distinct from $\mathcal{R}_{d,n}$, $\mathcal{H}_{d,n}$ and $\mathcal{C}_{d,n}$;
    moreover, the restriction of this matroid to any copy of $K_{n-1}$ is equal to $\mathcal{R}_{d,n-1}$.
\end{corollary}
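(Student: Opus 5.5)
The statement to prove is \Cref{maincor}, and the plan is to derive it directly from \Cref{mainthm}. Fix $d \geq 2$ and $n \geq 2d+9$. Since $2d+9 \geq d+11$ for $d \geq 2$, \Cref{mainthm} gives a graph $G=([n],E)$ with two properties: $E$ is a basis of $\mathcal{R}_{d,n}$, and $M := \mathcal{M}_G(\mathcal{R}_{d,n})$ is a symmetric abstract $d$-rigidity matroid.

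\textbf{Restriction to $K_{n-1}$.} Take any copy $K$ of $K_{n-1}$. Every copy of $E$ spans all $n$ vertices, because $G$ is connected and is a basis of a rigidity matroid on $n \geq d+1$ vertices. Hence no copy of $E$ is contained in $K$. So the circuits of $M$ lying inside $K$ are exactly the circuits of $\mathcal{R}_{d,n}$ lying inside $K$. The restriction of $\mathcal{R}_{d,n}$ to $K$ is $\mathcal{R}_{d,n-1}$, since generic rigidity is preserved under restriction to a subset of the vertices. This gives the second claim.

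\textbf{$M$ differs from $\mathcal{R}_{d,n}$.} The set $E$ is a basis of $\mathcal{R}_{d,n}$ but a circuit of $M$.

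\textbf{$M$ differs from $\mathcal{H}_{d,n}$ and $\mathcal{C}_{d,n}$.} Here I compare restrictions to $K_{n-1}$. By the above, the restriction of $M$ to $K_{n-1}$ is $\mathcal{R}_{d,n-1}$. The restriction of $\mathcal{H}_{d,n}$ to $K_{n-1}$ is $\mathcal{H}_{d,n-1}$, and likewise for $\mathcal{C}$, because these form abstract rigidity matroid families. From the facts recalled in the introduction:
\begin{itemize}
\item $\mathcal{H}_{d,n-1} \neq \mathcal{R}_{d,n-1}$ whenever $n-1 \geq 2d+2$;
\item $\mathcal{C}_{d,n-1} \neq \mathcal{R}_{d,n-1}$ whenever $d \geq 4$ and $n-1 \geq 2d+4$.
\end{itemize}
Both inequalities hold since $n-1 \geq 2d+8$. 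This settles $\mathcal{H}$ for all $d \geq 2$ and $\mathcal{C}$ for $d \geq 4$.

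For $d = 2$ and $d = 3$ the matroid $\mathcal{C}_{d,n}$ needs a separate argument. When $d=2$, $\mathcal{C}_{2,n} = \mathcal{R}_{2,n} \neq M$, so there is nothing more to do. When $d=3$, whether $\mathcal{C}_{3,n}$ equals $\mathcal{R}_{3,n}$ is open, so I split into two cases:
\begin{itemize}
\item If $\mathcal{C}_{3,n} = \mathcal{R}_{3,n}$, then $\mathcal{C}_{3,n} \neq M$ by the previous step.
\item If not, I compare on $E$. In $M$, $E$ is a circuit, so $M$ has rank $|E|-1$ on $E$. The restriction of $M$ to $E$ is $\mathcal{R}_{d,n}|_E$ with the single basis $E$ deleted. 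For $\mathcal{C}_{3,n} = M$ to hold, $\mathcal{C}_{3,n}$ would have to agree with $\mathcal{R}_{3,n}$ on every non-spanning subgraph and differ only in whether copies of $E$ are independent.
\end{itemize}

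I expect this $d=3$ case to be the main obstacle. My first attempt would use the fact that $\mathcal{C}_{3,n}$ is also a family: if $\mathcal{C}_{3,n} = M$, then $\mathcal{C}_{3,n-1} = \mathcal{R}_{3,n-1}$. I would then try to propagate this equality back up to level $n$ using the known structure of $\mathcal{C}_3$. If that fails, I would use the explicit form of $G$ from the construction and check directly that $E$ is independent in $\mathcal{C}_{3,n}$; for example, $G$ may be built by vertex splitting or 0-/1-extensions, which preserve independence in cofactor matroids. Then $E$ would be independent in $\mathcal{C}_{3,n}$ but a circuit in $M$, so $\mathcal{C}_{3,n} \neq M$.
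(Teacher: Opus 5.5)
Most of your argument is correct and tracks the paper. The restriction to $K_{n-1}$, the claim $M\neq\mathcal{R}_{d,n}$, and the separation from $\mathcal{H}_{d,n}$ (all $d\ge 2$) and from $\mathcal{C}_{d,n}$ ($d\ge 4$) by comparing restrictions to $K_{n-1}$ are what the paper does. The paper argues explicitly via $K_{d+1,d+1}$ and $K_{d+2,d+2}$, while you use the distinctness facts from the introduction; the two are equivalent.

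The genuine gap is the case $d=3$ for $\mathcal{C}_{3,n}$, which you leave open: you split on whether $\mathcal{C}_{3,n}=\mathcal{R}_{3,n}$ and then only sketch possible attacks. The missing idea is \Cref{thm:maximalityconj} (Clinch--Jackson--Tanigawa), which is already available in the preliminaries. It says every abstract $3$-rigidity matroid on $K_n$ lies below $\mathcal{C}_{3,n}$ in the weak order. Applied to $\mathcal{R}_{3,n}$, it gives $\mathcal{R}_{3,n}\preceq\mathcal{C}_{3,n}$. Hence $E$, a basis of $\mathcal{R}_{3,n}$, is independent in $\mathcal{C}_{3,n}$, while $E$ is a circuit of $M$. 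In other words $M\prec\mathcal{R}_{3,n}\preceq\mathcal{C}_{3,n}$, so $M\neq\mathcal{C}_{3,n}$. This needs no case split and no knowledge of the particular $G$, and the same line also handles $d=2$.

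Your first fallback idea has no mechanism behind it. Knowing $\mathcal{C}_{3,n-1}=\mathcal{R}_{3,n-1}$ gives no way to conclude $\mathcal{C}_{3,n}=\mathcal{R}_{3,n}$. Agreement with $\mathcal{R}_{d,n-1}$ on every copy of $K_{n-1}$ does not determine the matroid on $K_n$, and your $M$ is itself a counterexample to that kind of propagation.

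Your second idea can be made to work, but only once it is pinned down. For $d=3$, the graph from \Cref{mainthm} is the cone of a $(2,3)$-tight graph, i.e.\ of a basis of $\mathcal{R}_{2,n-1}=\mathcal{C}_{2,n-1}$. Coning preserves independence in cofactor matroids (Whiteley's result quoted in \Cref{sec:further}), so $E$ is independent in $\mathcal{C}_{3,n}$. Your phrase about vertex splitting or 0-/1-extensions does not give this. As written, the proposal does not establish $M\neq\mathcal{C}_{3,n}$.
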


As the only abstract 1-rigidity matroid on $K_n$ is the graphical matroid for $K_n$, neither \Cref{mainthm} nor \Cref{maincor} can be extended to the case $d=1$.
The values of $n$ for which the result holds can be improved, but it is unclear by how much.

Our constructed matroids are in some ways `pathological', as (in the $d=2$ case at least) they are not contained within an abstract $d$-rigidity matroid family.

\begin{corollary}\label{maincor2}
    For each $n \geq 13$, there exists a symmetric abstract 2-dimensional rigidity matroid on $K_n$ that is not contained in any abstract 2-rigidity matroid family.
\end{corollary}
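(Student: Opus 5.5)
We need to prove Corollary maincor2: for $n\ge 13$ there is a symmetric abstract 2-rigidity matroid on $K_n$ not contained in any abstract 2-rigidity matroid family.

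The plan is to take $M := \mathcal{M}_G(\mathcal{R}_{2,n})$ from \Cref{mainthm} with $d=2$; this is allowed since $n \geq 13 = d+11$. Suppose for contradiction that $M = M_n$ for some abstract 2-rigidity matroid family $(M_k)_{k\ge N}$. Then the family continues to $M_{n+1}$, a symmetric abstract 2-rigidity matroid on $K_{n+1}$ whose restriction to every copy of $K_n$ is isomorphic to $M$. By symmetry this restriction is in fact equal to $M$ under the natural identification, because the copies of $E$ form a single isomorphism class. Iterating the restriction property, the restriction of $M$ to any $K_{n-1}$ equals $\mathcal{R}_{2,n-1}$. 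So $M_k=\mathcal{R}_{2,k}$ for all $k\le n-1$ in the family, while $M_n \ne \mathcal{R}_{2,n}$.

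To get a contradiction I would use a 2-dimensional ``gluing'' property that forces $M_{n+1}$ to agree with rigidity. In any abstract 2-rigidity matroid, the rank of a subgraph is at most $2|V|-3$. Moreover, 0-extensions preserve independence, since a vertex of degree $2$ (or less) is a coloop-like addition in abstract rigidity matroids. I would use the standard fact that in an abstract $d$-rigidity matroid, adding a vertex of degree at most $d$ to an independent set keeps it independent; this follows from the $K_{d+2}$-circuit axiom via closure arguments, as in Graver. Now the copy $E$ of $G$ on $[n]$ is a circuit of $M_{n+1}|_{[n]}$. Take $E' := E - e + \{v\,x, v\,y\}$ in $K_{n+1}$, where $v=n+1$ and $e=xy$ is an edge of $E$.

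The key step is to choose $e$ and the position of $v$ so that $E'$ becomes, after deleting a vertex of degree $2$ other than $v$, a copy of $G$ on a different $n$-vertex set. Alternatively, $E'$ should contain $G$ as a spanning subgraph on some $n$-subset. The aim is a contradiction of the following form:
\begin{itemize}
\item on one hand, $E'$ (or a related set) is independent by a 0-extension from the independent set $E-e$;
\item on the other hand, it contains a copy of $E$, which is dependent in $M_{n+1}$.
\end{itemize}
Concretely, if $G$ has a vertex $u$ of degree $2$ with neighbours $x,y$, then $G-u$ is independent in $\mathcal{R}_{2,n-1}=M_{n-1}$. Adding $v$ adjacent to $x,y$ gives $G-u+v$. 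Adding $u$ back with its two edges gives a graph on $n+1$ vertices with $2(n+1)-3$ edges. This graph is independent, being built from $G-u$ by two 0-extensions, yet it contains a copy of $G$ on $[n]$, which is a circuit. That is the contradiction.

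The main obstacle is that this requires $G$ to have a vertex of degree $2$. If the $G$ provided by \Cref{mainthm} does not, I would instead use the degree-3 analogue: the 1-extension (Henneberg II) operation. This is much more delicate, since a 1-extension need not preserve independence in abstract rigidity matroids. For that case I would look into how the construction of $G$ in the paper's proof of \Cref{mainthm} is built, and exploit its explicit structure.
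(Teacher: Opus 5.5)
Your proposal has a genuine gap: the only case you actually argue, where $G$ has a vertex of degree $2$, can never occur, and the case that does occur is left open.

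\textbf{Why the degree-2 case is vacuous.} The graph $G$ supplied by \Cref{mainthm} for $d=2$ is $\mathcal{R}_{2,n}$-acceptable. So it is $(2,3)$-tight with no proper $(2,3)$-tight subgraph on $4$ or more vertices. A degree-$2$ vertex $u$ would make $G-u$ exactly such a subgraph, on $n-1\ge 12$ vertices. Equivalently, by \Cref{lem:0-ext}, $E$ would already be independent in $\mathcal{M}_G(\mathcal{R}_{2,n})$ itself, since $E\setminus\{ux,uy\}$ is non-spanning and hence independent there. That contradicts $E$ being a circuit without any reference to $M_{n+1}$, which signals the case cannot arise. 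Hence $G$ has minimum degree $3$.

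\textbf{Why the remaining case is hard.} In the minimum-degree-$3$ case, $0$-extensions alone can never produce your contradiction. Adding a vertex $v$ of degree at most $2$ to an independent set cannot create a copy of $G$ through $v$, because every vertex of $G$ has degree at least $3$. As you note yourself, $1$-extensions are not known to preserve independence in abstract $2$-rigidity matroids. Overcoming this is exactly the content of a deep result, so ``exploit the explicit structure'' does not amount to an argument.

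\textbf{The missing idea.} Invoke Tyomkyn's theorem (\Cref{thm:tyomkyn}), which is how the paper argues.
\begin{enumerate}
\item Since $n\ge 13>6$, every copy of $K_{3,3}$ is a non-spanning subgraph of $K_n$.
\item The restriction of $\mathcal{M}_G(\mathcal{R}_{2,n})$ to non-spanning subgraphs agrees with $\mathcal{R}_{2,n}$. In $\mathcal{R}_{2,n}$, $K_{3,3}$ is independent: it is $(2,3)$-tight (\Cref{thm:laman}), or use \Cref{lem:bipcircrigsmall}.
\item So if $\mathcal{M}_G(\mathcal{R}_{2,n})=\mathcal{K}_n$ for some abstract $2$-rigidity matroid family $\mathcal{K}$, then $K_{3,3}$ is independent in $\mathcal{K}$.
\item Tyomkyn's theorem then gives $\mathcal{K}=\mathcal{R}_2$, hence $\mathcal{K}_n=\mathcal{R}_{2,n}$.
\item But $E$ is independent in $\mathcal{R}_{2,n}$ and is a circuit of $\mathcal{M}_G(\mathcal{R}_{2,n})$, a contradiction.
\end{enumerate}
This argument uses the existence of the whole family, not just one extension $M_{n+1}$. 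Your stronger goal, that $\mathcal{M}_G(\mathcal{R}_{2,n})$ admits no compatible extension even to $K_{n+1}$, is not established in the paper, and your sketch gives no route to it.
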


\subsection{Layout of paper}

In \Cref{sec:prelim} we cover the necessary background material surrounding abstract rigidity matroids.
In \Cref{sec:main} we prove \Cref{mainthm},
which we use to then prove \Cref{maincor} and \Cref{maincor2}.
In \Cref{sec:further} we discuss what results would be required to extend \Cref{mainthm}.

\section{Abstract rigidity matroids}\label{sec:prelim}

\subsection{Rigidity matroids}\label{subsec:rigiditymatroid}

For some choice of positive integers $d,n$,
choose a \emph{generic} vector $p = (p_i)_{i \in [n]} \in (\mathbb{R}^d)^n$,
in that the coordinates of $p$ form a set with transcendence degree $dn$ over $\mathbb{Q}$.
Let $R(p)$ be the $\binom{n}{2} \times dn$ matrix with columns labelled $(i,k) \in [n] \times [d]$ ordered lexicographically and rows labelled by $\binom{n}{2}$ (with any order), where row $ij$ is of the form
\begin{equation*}
    \big[
     ~ \mathbf{0} ~ \cdots ~  \mathbf{0}  ~ \overbrace{p_i - p_j}^{i} ~ 
    \mathbf{0} ~  \cdots  ~ \mathbf{0}  ~ \overbrace{p_j - p_i}^{j} ~ 
    \mathbf{0}  ~ \cdots  ~ \mathbf{0}  ~ 
    \big]
\end{equation*}
(Here we treat each $p_v$ as a row vector and $\mathbf{0}$ as the length-$d$ all-zeroes row vector.)
We define the $d$-dimensional rigidity matroid $\mathcal{R}_{d,n}$ on $K_n$ is now the row matroid of the matrix $R(p)$ for a generic choice of $p$.
We observe here that $\mathcal{R}_{d,n}$ is independent of our choice of $p$, so long as the chosen $p$ is generic.

The structure of $\mathcal{R}_{2,n}$ has a purely combinatorial characterisation due to the following result of Pollaczek-Geiringer (later to be rediscovered by Laman \cite{Laman1970} independently).

\begin{theorem}[Pollaczek-Geiringer \cite{pollaczekgeiringer1927}]\label{thm:laman}
    A set $E \subseteq \binom{n}{2}$ is independent in $\mathcal{R}_{2,n}$ if and only if every non-empty subset $F \subseteq E$ with induced vertex set $V(F)$ satisfies the inequality
    \begin{equation}\label{eq:23sparse}
        |F| \leq 2|V(F)| - 3.
    \end{equation}
\end{theorem}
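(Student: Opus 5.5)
The plan is the classical two-direction argument: a rank count for necessity, and the Henneberg inductive construction for sufficiency.

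\textbf{Necessity.} Suppose $E$ is independent in $\mathcal{R}_{2,n}$ and $F\subseteq E$ is non-empty. Then $F$ is independent as well, so its rows in $R(p)$ are linearly independent. These rows are supported on the $2|V(F)|$ columns indexed by $V(F)$. Restrict them to those columns, i.e.\ pass to the rigidity matrix of the framework on $V(F)$; this restriction loses no rank, since all other entries of these rows are zero. The kernel of the restricted matrix contains the infinitesimal isometries of the plane: the two translations and the rotation $p_v\mapsto p_vJ$, where $J$ is rotation by $\pi/2$. When $|V(F)|\ge 2$ and the points are distinct, which genericity guarantees, these three vectors are linearly independent. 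Hence $|F|\le 2|V(F)|-3$.

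\textbf{Sufficiency.} Call $E$ \emph{sparse} if it satisfies \eqref{eq:23sparse}, and \emph{tight} if in addition $|E|=2n-3$.

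First I reduce to tight sets. A non-empty set $X\subseteq[n]$ is \emph{tight} for $E$ if the edges of $E$ induced on $X$ number exactly $2|X|-3$. For tight $X,Y$ with $|X\cap Y|\ge 2$, submodularity of the induced-edge count shows that $X\cup Y$ and $X\cap Y$ are again tight. Now let $E$ be sparse with $|E|<2n-3$. The maximal tight sets pairwise share at most one vertex. A short count then shows some pair $ij\notin E$ is not contained in any tight set, and $E+ij$ is still sparse. Iterating, every sparse set lies in a tight one. Since independence is closed under subsets, it suffices to show that tight sets are independent.

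Next I prove tight graphs are independent by induction on $n$; the base case $n=2$ is a single edge. Let $G=([n],E)$ be tight. Counting degrees gives average degree $<4$. Every vertex has degree $\ge 2$, since deleting a vertex of degree $\le 1$ together with its edge would leave a subgraph violating sparsity. So some vertex $v$ has degree $2$ or $3$.
\begin{itemize}
\item If $\deg v=2$, then $G-v$ is tight. It is independent by induction, and adding $v$ with two edges to generic $p_v$ adds two rows whose $v$-columns are linearly independent. This is the $0$-extension step.
\item If $\deg v=3$, with neighbours $a,b,c$, I must find a pair, say $ab$, with $ab\notin E$ such that $G-v+ab$ is tight. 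Tightness fails only if $a,b$ lie in a common tight set of $G-v$. If all three pairs are blocked, the union property shows $\{a,b,c\}$ lies in a single tight set $X$ of $G-v$. Then $X\cup\{v\}$ induces $2|X|-3+3$ edges, violating sparsity of $G$. So a good pair exists, and $G-v+ab$ is independent by induction.
\end{itemize}

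\textbf{Main obstacle.} The step I expect to need the most care is showing that this $1$-extension preserves independence. The approach is to specialise $p_v$ to a point on the line through $p_a,p_b$, distinct from both and not on the line through $p_c$. In that position the rows for $va$ and $vb$ combine to reproduce, in the $a,b$ columns, a nonzero multiple of the $ab$ row, up to the $v$-columns. Then a row reduction shows
\[
\rank R_G(p)\ \ge\ \rank R_{G-v+ab}(p)+2,
\]
because the $vc$ row together with one of $va,vb$ contributes independent $v$-columns. Since rank can only drop under specialisation, the generic rank of $G$ is at least this value, which gives $2n-3$.
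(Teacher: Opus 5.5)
The paper does not prove this theorem. It cites Pollaczek-Geiringer (and Laman), so there is no in-paper argument to compare with. Your outline is the standard proof, and the rigidity-theoretic parts are right.

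\emph{Necessity} via the three trivial infinitesimal motions is fine.

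\emph{The geometric $1$-extension step} is also fine. Write $r_{xy}$ for the row of $xy$ and put $p_v=p_a+t(p_b-p_a)$ with $t\notin\{0,1\}$. Then $\tfrac1t r_{va}-\tfrac1{t-1}r_{vb}=r_{ab}$ exactly, so the row space of $R_G$ is spanned by the rows of $G-v+ab$ together with $r_{va}$ and $r_{vc}$. The nondegeneracy you actually need is $p_c\notin\overline{p_ap_b}$, not a condition on $p_v$. It holds because every point other than $p_v$ stays generic. That genericity is also what lets you apply the induction hypothesis at the specialised configuration.

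\emph{There is a gap in the degree-$3$ case.} Your union property needs $|X\cap Y|\ge 2$. The three blocking tight sets $X_{ab},X_{bc},X_{ac}$ of $G-v$ are only guaranteed to share one of $a,b,c$ pairwise, so the property does not by itself put $\{a,b,c\}$ into one tight set. You need a separate count:
\begin{itemize}
\item If two of the three sets meet in at least two vertices, their union is tight and contains $a,b,c$.
\item Otherwise they pairwise meet in exactly one vertex, the triple intersection is empty, and their induced edge sets are disjoint. Then $U=X_{ab}\cup X_{bc}\cup X_{ac}$ has $|U|=|X_{ab}|+|X_{bc}|+|X_{ac}|-3$ and induces at least $\sum(2|X|-3)=2|U|-3$ edges. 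By sparsity $U$ is tight and contains $a,b,c$.
\end{itemize}
With this fix your contradiction via $X\cup\{v\}$ goes through.

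\emph{The ``short count'' in your reduction to tight sets is not immediate.} Each edge lies in exactly one maximal tight set. You must show that if the maximal tight sets $X_t$ cover all pairs of $[n]$, then $\sum_t(2|X_t|-3)\ge 2n-3$. This is true:
\begin{itemize}
\item Fix $v$ and let $L_1,\dots,L_r$ be the maximal tight sets through $v$; they contribute $2(n-1)-r$. If $r=1$ then $L_1=[n]$ and you are done.
\item If $r\ge 2$, fix $a\in L_1\setminus\{v\}$. The maximal tight sets through $a$ other than $L_1$ avoid $v$. With $a$ removed they partition $[n]\setminus L_1$, so they contribute at least $n-|L_1|\ge r-1$.
\item The total is therefore at least $2n-3$.
\end{itemize}

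\emph{You can skip that reduction altogether.} Your degree-$3$ argument uses only that $G$ is sparse, so run the induction directly on sparse edge sets:
\begin{itemize}
\item Since $2|E|<4n$, there is a vertex $v$ of degree at most $3$.
\item Degree $0$ reduces to induction; degrees $1$ and $2$ are handled by a generic $p_v$.
\item Degree $3$ goes exactly as you wrote, giving rank at least $(|E|-3+1)+2=|E|$.
\end{itemize}
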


A graph $G=(V,E)$ is said to be \emph{$(2,3)$-sparse} if every non-empty subset $F \subset E$ satisfies \eqref{eq:23sparse}.
A graph $G$ with at least 2 vertices is said to be \emph{$(2,3)$-tight} if it is $(2,3)$-sparse and $|E|=2|V|-3$. 
Graver proved that such sparsity conditions hold generally in abstract rigidity matroids. 

\begin{theorem}[Graver {\cite[Thm.~5(b)]{graver1991}}]\label{thm:lamanish}
    Let $M$ be an abstract $d$-rigidity matroid on $K_n$. If $E$ is an independent set in $M$, then for all edge sets $F\subseteq E$ satisfying $|V(F)| \geq d$, one has
    \begin{equation*}
        |F|\leq d|V(F)| - \binom{d+1}{2}.
    \end{equation*}
\end{theorem}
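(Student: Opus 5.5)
This is Graver's theorem, so the plan is to prove it directly from the two defining properties of an abstract $d$-rigidity matroid $M$. The idea is to show that for every vertex set $U \subseteq [n]$ with $|U| \geq d$, the rank in $M$ of $\binom{U}{2}$, the edges of the complete graph on $U$, is at most $d|U| - \binom{d+1}{2}$. Once this is known, take an independent $E$ and any $F \subseteq E$ with $|V(F)| \geq d$. Then $F$ is independent and lies in $\binom{V(F)}{2}$, so $|F| = r_M(F) \leq r_M(\binom{V(F)}{2}) \leq d|V(F)| - \binom{d+1}{2}$, which is the claim.

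The rank bound on $K_U$ is proved by induction on $|U|$, using only that copies of $K_{d+2}$ are circuits.
\begin{itemize}
\item \emph{Base case $|U| = d$.} Trivially $r(\binom{U}{2}) \leq \binom{d}{2} = d\cdot d - \binom{d+1}{2}$.
\item \emph{Inductive step.} Write $U = U' \cup \{v\}$ with $|U'| \geq d$. It suffices to show that adding $v$ raises the rank by at most $d$. Fix a $d$-subset $S \subseteq U'$, and let $A = \binom{U'}{2} \cup \{vs : s \in S\}$, so that $r(A) \leq r(\binom{U'}{2}) + d$.
\item \emph{Remaining edges lie in the closure.} For any other $u \in U' \setminus S$, the vertex set $S \cup \{u, v\}$ spans a copy of $K_{d+2}$, which is a circuit of $M$. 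All its edges except $uv$ lie in $A$, so $uv \in \operatorname{cl}(A)$.
\item \emph{Conclusion.} Hence $\binom{U}{2} \subseteq \operatorname{cl}(A)$, so $r(\binom{U}{2}) \leq r(A) \leq d|U'| - \binom{d+1}{2} + d = d|U| - \binom{d+1}{2}$.
\end{itemize}

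The rank condition on $M$ is not even needed for this direction; it only shows the bound is tight when $U = [n]$. The main point needing care is the closure step: we must check that every edge of the $K_{d+2}$ on $S \cup \{u,v\}$ other than $uv$ really lies in $A$. The edges inside $S \cup \{u\}$ lie in $\binom{U'}{2}$, and the edges from $v$ to $S$ were added explicitly, so the circuit $K_{d+2}$ minus $uv$ sits in $A$ and forces $uv$ into the closure. There is no genuine obstacle beyond this bookkeeping.
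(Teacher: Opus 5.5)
Your proof is correct. The paper does not prove this statement itself. It cites Graver, whose result is formulated for his original closure-based axioms, and it implicitly relies on Nguyen's equivalence theorem to apply that result under the two-condition definition used in the paper (rank $dn-\binom{d+1}{2}$, every copy of $K_{d+2}$ a circuit).

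You instead argue directly from the paper's definition. Your induction bounding $\rank_M\bigl(\binom{U}{2}\bigr)$ rests on one observation: once $v$ is joined to a $d$-set $S\subseteq U'$, each remaining edge $uv$ completes the $K_{d+2}$ on $S\cup\{u,v\}$, and so lies in the closure. In effect this derives a special case of Graver's gluing axiom from the $K_{d+2}$-circuit condition. That axiom says: if two edge sets each span the complete graph on their vertex sets and share at least $d$ vertices, then so does their union.

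What your route buys is a short, self-contained proof that needs neither Graver's axioms nor Nguyen's theorem. It also makes explicit that only the circuit condition is used, not the rank condition. The small-$n$ cases are handled as well, since you invoke the circuit condition only when $|U'|\ge d+1$, in which case $n\ge |U|\ge d+2$ and copies of $K_{d+2}$ exist. The citation, by contrast, is simply the economical choice for a result the paper uses only as background.
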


Given two matroids $M,N$ that share a ground set, we say that $M$ is less than $N$ in the \emph{weak order} -- here denoted $M \preceq N$, with $M \prec N$ reserved for the case where $M \neq N$ -- if every independent set of $M$ is an independent set of $N$.
It follows from \Cref{thm:laman} and \Cref{thm:lamanish} that $\mathcal{R}_{2,n}$ is the maximal abstract 2-rigidity matroid (with respect to the weak order of matroids).

\begin{theorem}[Graver \cite{graver1991}]\label{thm:maximal}
    If $M$ is an abstract $d$-rigidity matroid on $K_n$ for $d \in \{1,2\}$, then $M \preceq \mathcal{R}_{d,n}$.
\end{theorem}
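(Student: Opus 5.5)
The plan is to read off the inequality directly from the two sparsity theorems already stated: \Cref{thm:lamanish} gives a necessary sparsity condition for independence in any abstract rigidity matroid, and for $d \in \{1,2\}$ that same condition is also sufficient for independence in $\mathcal{R}_{d,n}$. So take an arbitrary independent set $E$ of $M$ and show that $E$ is independent in $\mathcal{R}_{d,n}$; this is exactly $M \preceq \mathcal{R}_{d,n}$.

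For $d=2$, let $E$ be independent in $M$ and let $F \subseteq E$ be non-empty. Since all graphs are simple, $F$ contains an edge with two distinct endpoints, so $|V(F)| \geq 2 = d$. \Cref{thm:lamanish} then gives $|F| \leq 2|V(F)| - 3$. Hence $E$ is $(2,3)$-sparse, and by \Cref{thm:laman} it is independent in $\mathcal{R}_{2,n}$.

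For $d=1$, the same argument with \Cref{thm:lamanish} gives $|F| \leq |V(F)| - 1$ for every non-empty $F \subseteq E$, since again $|V(F)| \geq 2 \geq 1$. So $E$ contains no cycle, i.e.\ $E$ is a forest. It remains to see that forests are independent in $\mathcal{R}_{1,n}$. For $d=1$, row $ij$ of $R(p)$ is $(p_i - p_j)(\mathbf{e}_i - \mathbf{e}_j)$, and the scalar $p_i - p_j$ is non-zero by genericity. Thus $R(p)$ is the signed vertex-edge incidence matrix of $K_n$ with each row rescaled by a non-zero scalar, so its row matroid is the graphic matroid of $K_n$, in which forests are exactly the independent sets. (Alternatively, cite the remark from the introduction that the graphical matroid is the unique abstract $1$-rigidity matroid, which already gives $M = \mathcal{R}_{1,n}$.)

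I do not expect a real obstacle. The only points needing care are checking that the hypothesis $|V(F)| \geq d$ of \Cref{thm:lamanish} holds for every non-empty $F$, which uses simplicity of the graphs and $d \leq 2$, and identifying $\mathcal{R}_{1,n}$ with the graphic matroid in the $d=1$ case.
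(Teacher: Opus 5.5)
Your proposal is correct and matches the paper's own justification, which simply observes that the statement follows from \Cref{thm:laman} combined with \Cref{thm:lamanish}; your check that every non-empty $F$ has $|V(F)|\geq 2$ is the step needed to invoke the latter. For $d=1$, your identification of $\mathcal{R}_{1,n}$ with the graphic matroid via the rescaled signed incidence matrix agrees with the paper's remark that the graphic matroid is the unique abstract $1$-rigidity matroid.
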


\Cref{thm:maximal} was recently strengthened by Tyomkyn.

\begin{theorem}[Tyomkyn \cite{tyomkyn2026planerigiditymatroids}]\label{thm:tyomkyn}
    Let $\mathcal{K}$ be an abstract 2-rigidity matroid family where $K_{3,3}$ is independent.
    Then $\mathcal{K} = \mathcal{R}_2$.
\end{theorem}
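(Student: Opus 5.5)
Throughout, let $\mathcal{K}=(M_n)_{n\ge N}$ be the family. The plan is to prove $M_n=\mathcal{R}_{2,n}$ by induction on $n$. Two observations set this up. First, \Cref{thm:maximal} already gives $M_n\preceq\mathcal{R}_{2,n}$, so it suffices to show that every $(2,3)$-tight graph on $[n]$ is independent in $M_n$. Second, the family condition says that the restriction of $M_n$ to any copy of $K_{n-1}$ is isomorphic to $M_{n-1}$, which equals $\mathcal{R}_{2,n-1}$ by induction. Hence $M_n$ and $\mathcal{R}_{2,n}$ agree on every edge set that misses some vertex, and the only possible difference lies among \emph{spanning} edge sets. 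In particular, a circuit of $M_n$ contained in a $(2,3)$-tight graph $G$ must be spanning.

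The inductive step uses the Henneberg description of $(2,3)$-tight graphs: each is obtained from $K_2$ by $0$-extensions (add a vertex of degree $2$) and $1$-extensions (delete an edge $ab$, add a vertex $v$ joined to $a,b,c$). Fix a spanning $(2,3)$-tight graph $G$ on $[n]$ and a vertex $v$ of degree at most $3$, which exists by counting.

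\textbf{Degree $2$.} Then $G-v$ is $(2,3)$-tight on $n-1$ vertices, so it is independent in $M_n$ by the induction hypothesis. I would then show that adding a vertex of degree $2$ preserves independence in any abstract $2$-rigidity matroid. This follows from the rank condition: $\rank_{M_n}(K_n-v)=2n-5$, since $K_n-v$ is a copy of $K_{n-1}$. Together with the $K_4$-circuits through $v$, this forces the closure of $(K_n-v)\cup\{va\}$ to miss $vb$.

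\textbf{Degree $3$.} Here $G=(G-v+ab)$ undergoes a $1$-extension, and $G-v+ab$ is independent in $M_n$ by induction. Suppose $G$ is dependent, with a spanning circuit $C\subseteq G$ containing $va,vb,vc$. I would use circuit elimination between $C$ and the $K_4$-circuit on $\{v,a,b,c\}$, together with the known structure of $M_n$ on non-spanning sets. The goal is to show that the whole family of such configurations is forced to be dependent together. More precisely, if one $(2,3)$-tight graph on $[n]$ is dependent in $M_n$, then, using symmetry, so is every $(2,3)$-tight graph reachable from it by single edge exchanges that keep all intermediate graphs $(2,3)$-tight.

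The main obstacle is this $1$-extension step, since it is exactly where general abstract rigidity matroids can deviate from $\mathcal{R}_2$. Tyomkyn's $K_6$ example removes the prism, and the prism is a $1$-extension of $K_4$ minus an edge. This is also where the hypothesis on $K_{3,3}$ must be used. I would aim to show that any bad (dependent but $(2,3)$-tight) spanning graph propagates, via the exchange argument above combined with the family condition, down to a dependent copy of $K_{3,3}$ inside some $M_m$. This contradicts the hypothesis once $m\ge 6$. Concretely, I would first handle $n=6$ by direct case analysis: among the $6$-vertex $(2,3)$-tight graphs, the basis exchange graph connects every graph to $K_{3,3}$, so independence of $K_{3,3}$ forces all of them to be independent. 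For larger $n$, I would show that a dependent spanning $(2,3)$-tight graph in $M_n$ yields, after restriction and relabelling via the family condition, a dependent $(2,3)$-tight graph in $M_{n-1}$, contradicting the induction hypothesis. I expect this last reduction, which shows that the failure cannot be confined to spanning graphs at a single level of the family, to be the genuinely hard part.
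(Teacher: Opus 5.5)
\textbf{There is a genuine gap: the induction only uses the family downward, and no such argument can prove the statement.} Your reduction via \Cref{thm:maximal} to spanning $(2,3)$-tight graphs is fine, and so is your degree-$2$ step. But the base case and the inductive step use only three facts about $M_n$:
\begin{itemize}
\item it is a symmetric abstract $2$-rigidity matroid;
\item its restrictions to copies of $K_{n-1}$ equal $\mathcal{R}_{2,n-1}$;
\item $K_{3,3}$ is independent in it.
\end{itemize}
The paper's own results show these facts do not force $M_n=\mathcal{R}_{2,n}$.

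For $n\ge 13$, \Cref{mainthm} with $d=2$ gives $\mathcal{M}_G(\mathcal{R}_{2,n})$. It has all three properties, yet the $(2,3)$-tight graph $G$ is a circuit in it. So your reduction ``a dependent spanning $(2,3)$-tight graph in $M_n$ yields a dependent $(2,3)$-tight graph in $M_{n-1}$'' is false: the failure \emph{can} be confined to spanning graphs at a single level. The same matroid breaks your $1$-extension step. Take a degree-$3$ vertex $v$ of $G$ and a suitable pair $a,b$ of its neighbours. Then $G-v+ab$ is $(2,3)$-tight and non-spanning, hence independent, while its $1$-extension $G$ is dependent.

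Your propagation principle fails for the same reason. By conditions (ii) and (iii) of \Cref{lem:circuit_condition}, every $G-e+f$ is a $(2,3)$-tight basis of $\mathcal{M}_G(N)$, while the copies of $G$ are circuits. So dependence does not spread along tight single-edge exchanges. In particular, your $n=6$ base case is refuted by Tyomkyn's prism example from the introduction. Removing the prisms from the bases of $\mathcal{R}_{2,6}$ leaves a symmetric abstract $2$-rigidity matroid, with restriction $\mathcal{R}_{2,5}$, in which $K_{3,3}$ is a basis but the prism is a circuit.

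\textbf{The missing idea is to use the family upward.} $\mathcal{M}_G(\mathcal{R}_{2,n})$ meets every condition visible at levels $\le n$, yet by \Cref{maincor2} it lies in no family. So the only information that can rule it out is that $M_n$ must also be the restriction of $M_{n+1},M_{n+2},\dots$. A correct proof has to take a putative dependent spanning $(2,3)$-tight graph at level $n$ and regard it as a \emph{non-spanning} dependent set in some $M_m$ with $m>n$. It must then use the spare vertices there, together with the independence of $K_{3,3}$, to reach a contradiction. One natural target is showing that $1$-extensions preserve independence when extra vertices are available. The paper does not reprove \Cref{thm:tyomkyn}; it cites Tyomkyn. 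Its main construction nonetheless shows that your route cannot be completed without this upward ingredient.
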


Very little is known about the structure of $\mathcal{R}_{d,n}$ when $d \geq 3$,
although some things are known about specific families of graphs.
For example,
the following two lemmas follow from Bolker and Roth's \cite{bolker1980bipartite} characterisation of dependent complete bipartite graphs in rigidity matroids.

\begin{lemma}\label{lem:bipcircrigsmall}
    For all integers $d \geq 2$ and $n \geq 2d+2$, every copy of $K_{d+1,d+1}$ is independent in $\mathcal{R}_{d,n}$.
\end{lemma}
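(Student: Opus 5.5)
By symmetry of $\mathcal{R}_{d,n}$ it is enough to prove that one fixed copy of $K_{d+1,d+1}$, say with parts $A=\{a_0,\dots,a_d\}$ and $B=\{b_0,\dots,b_d\}$, is independent. Since $\mathcal{R}_{d,n}$ is the row matroid of $R(p)$, this means showing that the rows of $R(p)$ indexed by these $(d+1)^2$ edges are linearly independent for generic $p$. Equivalently, the framework $(K_{d+1,d+1},p)$ should have no non-zero self-stress, i.e.\ no non-zero $w=(w_{ij})$ with $\sum_j w_{ij}(p_{a_i}-p_{b_j})=0$ for each $i$, and symmetrically at each $b_j$.

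As a sanity check, $(d+1)^2 \le d(2d+2)-\binom{d+1}{2}$ holds exactly when $d \ge 2$. So \Cref{thm:lamanish} gives no obstruction, and $d\ge 2$ is genuinely needed.

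The intended route is to quote Bolker and Roth \cite{bolker1980bipartite}. They compute the stress space of a generic $K_{m,k}$ in $\mathbb{R}^d$ with $m,k \ge d+1$ in terms of affine dependencies of the two parts. With both parts of size exactly $d+1$ these parts are affinely independent, and their formula gives stress dimension $0$ in the range $m+k\le\binom{d+2}{2}$. Here $2d+2 \le \binom{d+2}{2}$ for $d\ge 2$, so the copy is independent.

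For a self-contained fallback argument, I would argue directly with barycentric coordinates.
\begin{enumerate}
\item For generic $p$, the points $p_{b_0},\dots,p_{b_d}$ are affinely independent, so each $p_{a_i}=\sum_j \lambda_{ij}p_{b_j}$ with $\sum_j\lambda_{ij}=1$. Likewise $p_{b_j}=\sum_i\mu_{ji}p_{a_i}$.
\item The equilibrium at $a_i$ reads $\sum_j w_{ij}p_{b_j}=s_i p_{a_i}$, where $s_i=\sum_j w_{ij}$. If $s_i=0$, this is an affine dependence among the $p_{b_j}$, which forces $w_{i\cdot}=0$. Otherwise uniqueness of barycentric coordinates gives $w_{ij}=s_i\lambda_{ij}$. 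In either case $W=S\Lambda$ with $S=\mathrm{diag}(s_i)$, and symmetrically $W=M^{T}T$ with $T=\mathrm{diag}(t_j)$.
\item The barycentric matrices satisfy $\Lambda M = I$. The identity $S\Lambda = \Lambda^{-T}T$ therefore becomes $\Lambda^{T} S \Lambda = T$: the congruence of the diagonal matrix $S$ by the generic row-stochastic matrix $\Lambda$ must again be diagonal.
\item The matrix $\Lambda$ is generic among row-stochastic matrices, since we may fix $p_B$ and choose $p_A$ freely. So it suffices to show that for one explicit such $\Lambda$, the only diagonal $S$ making $\Lambda^T S\Lambda$ diagonal is $S=0$. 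This is a linear system of $\binom{d+1}{2}$ off-diagonal equations in $d+1$ unknowns $s_i$.
\end{enumerate}

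The main obstacle is step 4 in the tight case $d=2$, where there are as many equations as unknowns. There I would pick an explicit rational $\Lambda$ and check that the resulting $3\times 3$ determinant is non-zero. For general $d$, I would choose $\Lambda$ with a near-triangular pattern and argue that the off-diagonal equations in rows $(0,k)$ successively force every $s_i$ to vanish. Once one such $\Lambda$ works, the conclusion transfers to generic $p$, because the relevant maximal minor of $R(p)$ is a polynomial that is then not identically zero.
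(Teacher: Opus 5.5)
Your primary route, quoting Bolker and Roth, is exactly what the paper does. The paper gives no argument beyond saying the lemma follows from their characterisation of dependent complete bipartite frameworks. Your reading of that result is also right: for $m,k\ge d+1$ and $m+k\le\binom{d+2}{2}$, the generic stress space has dimension $(m-d-1)(k-d-1)$, which is $0$ here.

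Your barycentric fallback is sound through step 3, but the general-$d$ sketch in step 4 cannot work as stated. The entries $(0,k)$, $k=1,\dots,d$, of $\Lambda^{T}S\Lambda$ give only $d$ homogeneous equations in the $d+1$ unknowns $s_i$. Such a system always has a non-zero solution, so you must also use some other off-diagonal entry.

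A choice that settles every $d\ge 2$ at once is $\lambda_i=2e_i-e_{i+1}$ with indices mod $d+1$, i.e.\ $p_{a_i}=2p_{b_i}-p_{b_{i+1}}$. Since $d+1\ge 3$, the pairs $\{i,i+1\}$ are distinct. Hence the $(i,i+1)$ entry of $\Lambda^{T}S\Lambda$ is just $-2s_i$, which forces $S=0$. Moreover $\det\Lambda=2^{d+1}-1\neq 0$, so the invertibility needed in step 3 holds.
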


\begin{lemma}\label{lem:bipcircrig}
    For all integers $d \geq 3$ and $n \geq 2d+4$, every copy of $K_{d+2,d+2}$ is a circuit in $\mathcal{R}_{d,n}$.
\end{lemma}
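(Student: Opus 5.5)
Since every copy of $K_{d+2,d+2}$ in $K_n$ is mapped to every other by a permutation of $[n]$, and $\mathcal{R}_{d,n}$ is symmetric, it suffices to treat one fixed copy with vertex classes $A=\{a_1,\dots,a_{d+2}\}$ and $B=\{b_1,\dots,b_{d+2}\}$ and a generic placement $p$. Recall that a dependence among rows of $R(p)$ is a \emph{self-stress} $\omega$: a vector indexed by edges with $\sum_{j} \omega_{ij}(p_i-p_j)=0$ at every vertex $i$. The set $E$ of edges of $K_{d+2,d+2}$ is a circuit of the row matroid exactly when two things hold: the space of self-stresses supported on $E$ is one-dimensional, and a spanning stress is nonzero on every edge. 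I will establish both.

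\textbf{Dependence via an explicit stress.} The $d+2$ generic points $p_{a_1},\dots,p_{a_{d+2}}$ in $\mathbb{R}^d$ have, up to scaling, exactly one affine dependence $\lambda$, meaning $\sum_i \lambda_i=0$ and $\sum_i\lambda_i p_{a_i}=0$. Genericity makes every coefficient $\lambda_i$ nonzero, since any $d+1$ of these points are affinely independent. Let $\mu$ be the analogous dependence on $B$. Set $\omega_{a_ib_j}:=\lambda_i\mu_j$. At the vertex $a_i$ we get
\begin{equation*}
\sum_j \lambda_i\mu_j(p_{a_i}-p_{b_j})=\lambda_i\Big(p_{a_i}\sum_j\mu_j-\sum_j\mu_jp_{b_j}\Big)=0,
\end{equation*}
and the computation at each $b_j$ is symmetric. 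So $\omega$ is a self-stress that is nonzero on every edge of $K_{d+2,d+2}$, and hence $E$ is dependent.

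\textbf{Uniqueness of the stress.} It remains to show that this is the only self-stress up to scaling, i.e.\ that the rank of $E$ is $(d+2)^2-1$. Granting this, every edge $e$ lies in the support of the unique stress, so $E-e$ carries no nonzero stress and is independent. Together with the dependence of $E$, this makes $E$ a circuit.

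This uniqueness is where I would invoke Bolker and Roth's \cite{bolker1980bipartite} computation of the stress space of a generic complete bipartite framework. For classes of sizes $m,n$, their description identifies the stresses with products of affine dependences on the two sides, plus extra contributions coming from quadric hypersurfaces through all $m+n$ points. I expect the main obstacle to be checking that this extra contribution vanishes in the case $m=n=d+2$, $d\ge 3$. This is precisely the point where $d=2$ fails: there $K_{4,4}$ has $16-13=3$ independent stresses. Note that the crude edge count cannot settle the case, since $(d+2)^2-\big(d(2d+4)-\binom{d+1}{2}\big)=4+\tfrac{d}{2}-\tfrac{d^2}{2}\le 1$.

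If the citation is inconvenient, I would instead prove the rank directly, by exhibiting a single placement where $E$ minus one edge is infinitesimally independent; lower semicontinuity of rank then transfers this to generic $p$. My first attempt would be an inductive construction: start from $K_{d+1,d+1}$, which is independent by \Cref{lem:bipcircrigsmall}, and add the two remaining vertices one at a time using $0$-extensions and $1$-extensions, checking at each step that the number of new edges does not exceed $d$ (or is handled by a $1$-extension). The hypothesis $n\ge 2d+4$ is only needed so that a copy of $K_{d+2,d+2}$ exists in $K_n$.
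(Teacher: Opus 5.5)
Your product stress $\omega_{a_ib_j}=\lambda_i\mu_j$ is correct and nowhere zero. Reducing the lemma to showing that the stress space is one-dimensional is the right move, and the paper itself gives nothing beyond a citation of Bolker and Roth. However, there is a gap: you leave exactly that step open, and the version of Bolker--Roth you intend to check points the wrong way.

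Quadrics through all of $A\cup B$ produce infinitesimal \emph{motions}, not stresses. If $x^{T}Qx+2q^{T}x+c$ vanishes on $A\cup B$, then $u_a=Qp_a+q$ on $A$ and $u_b=-(Qp_b+q)$ on $B$ is a non-trivial flex. For $d\ge 4$, $2d+4$ generic points lie on a $\bigl(\binom{d+2}{2}-2d-4\bigr)$-dimensional space of quadrics; this is why $K_{d+2,d+2}$ is flexible there. For $d=2$ they lie on none. So the ``extra contribution'' as you describe it would be nonzero precisely where you need it to vanish, and zero at $d=2$, where you correctly say there are extra stresses.

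Your fallback also fails. Each $0$- or $1$-extension adds exactly $d$ net edges, so two of them take the $(d+1)^2$ edges of $K_{d+1,d+1}$ to $(d+1)^2+2d$. But $K_{d+2,d+2}-e$ has $(d+2)^2-1=(d+1)^2+2d+2$ edges. In addition, a $1$-extension needs an edge between two neighbours of the new vertex, and a bipartite graph never has one.

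The missing step has a short direct proof, which is what Bolker--Roth amounts to in this case. Put $\hat v=(p_v,1)\in\mathbb{R}^{d+1}$. For a stress $\omega$, let $r_i=\sum_j\omega_{a_ib_j}$ and $c_j=\sum_i\omega_{a_ib_j}$. Equilibrium at $a_i$ and at $b_j$ reads $\sum_j\omega_{a_ib_j}\hat b_j=r_i\hat a_i$ and $\sum_i\omega_{a_ib_j}\hat a_i=c_j\hat b_j$. Expanding $\sum_{i,j}\omega_{a_ib_j}\hat a_i\hat b_j^{T}$ in two ways therefore gives
\begin{equation*}
\sum_i r_i\,\hat a_i\hat a_i^{T}-\sum_j c_j\,\hat b_j\hat b_j^{T}=0 .
\end{equation*}
Here $\mathrm{tr}(S\hat v\hat v^{T})$ is the value at $p_v$ of the quadric with symmetric matrix $S$. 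So these $2d+4$ matrices are linearly independent whenever the points impose independent conditions on quadrics. For generic points this holds exactly when $2d+4\le\binom{d+2}{2}$, i.e.\ $d\ge 3$, and fails for $d=2$. Hence $r=c=0$. Each row of $(\omega_{a_ib_j})$ is then an affine dependence of $B$, so $\omega_{a_ib_j}=\alpha_i\mu_j$ for some $\alpha$. Taking $j$ with $\mu_j\neq 0$, the column condition gives $\sum_i\alpha_i\hat a_i=0$, which forces $\alpha\in\mathbb{R}\lambda$. Thus every stress is a multiple of $\lambda\otimes\mu$, and together with your first part this proves the lemma.
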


We can convert independent graphs in the $d$-rigidity matroid into independent graphs in the $(d+1)$-rigidity matroid using the \emph{coning operation}.
Here we recall that the \emph{cone} of a graph is the graph formed by adding a new vertex adjacent to all other vertices.

\begin{theorem}[Whiteley {\cite[Thm 5]{whiteley1983}}]\label{thm:rigidcone1}
    A graph is independent in $\mathcal{R}_d$ if and only its cone is independent in $\mathcal{R}_{d+1}$.
\end{theorem}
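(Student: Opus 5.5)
The plan is to compare infinitesimal motions (the right kernels of the rigidity matrices) rather than rows directly. Let $G=([n],E)$, let $0$ be the cone vertex, and write $G^*$ for the cone. Since each row of $R(p)$ has entries summing blockwise to zero, translations always lie in the kernel. Because $\mathcal{R}_{d+1,n+1}$ does not depend on the generic choice, I may translate so that the apex sits at the origin, i.e.\ $p_0=0$. Then a generic configuration of the remaining vertices has all last coordinates nonzero, so $p_i = t_i(q_i,1)$ with $t_i \neq 0$ and $q=(q_i)\in(\mathbb{R}^d)^n$. Since the coordinates of $p$ have transcendence degree $(d+1)n$, the $q_i$ form a generic configuration in $\mathbb{R}^d$.

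\textbf{Step 1 (normalise the motions).} After subtracting a translation, every infinitesimal motion $u$ of $(G^*,p)$ has $u_0=0$, and then the cone edges force $p_i\cdot u_i=0$ for all $i$. Under these constraints the edge condition for $ij\in E$ becomes bilinear:
\[
(p_i-p_j)\cdot(u_i-u_j) = -(p_i\cdot u_j + p_j\cdot u_i).
\]
Substituting $u_i = t_i u_i'$ and $\hat q_i=(q_i,1)$ turns each constraint into $t_it_j$ times the same constraint for the configuration $\hat q$. This gives a linear isomorphism between the motions of $(G^*,p)$ with $u_0=0$ and those of $(G^*,\hat q)$ with $u_0=0$. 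So I may assume every $t_i=1$.

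\textbf{Step 2 (reduce to $\mathbb{R}^d$).} With $p_i=(q_i,1)$, the constraint $p_i\cdot u_i=0$ says $u_i=(w_i,-q_i\cdot w_i)$ for an arbitrary $w_i\in\mathbb{R}^d$. Expanding $(p_i-p_j)\cdot(u_i-u_j)$, the last coordinates of $p_i-p_j$ cancel, so the expression is exactly $(q_i-q_j)\cdot(w_i-w_j)$. Hence the motions of the cone with $u_0=0$ are isomorphic to all infinitesimal motions of $(G,q)$ in $\mathbb{R}^d$.

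\textbf{Step 3 (count ranks).} The kernel of $R_{G^*}(p)$ is the direct sum of the $(d+1)$-dimensional translation space and the motions with $u_0=0$. By Steps 1 and 2, its dimension is $(d+1)+\big(dn-\rank R_G(q)\big)$. Therefore
\[
\rank R_{G^*}(p) = (d+1)(n+1) - (d+1) - dn + \rank R_G(q) = \rank R_G(q) + n.
\]
Since $G^*$ has $|E|+n$ edges, it is independent in $\mathcal{R}_{d+1}$ iff $\rank R_G(q)=|E|$, i.e.\ iff $G$ is independent in $\mathcal{R}_d$, using that $q$ is generic.

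The main obstacle is the genericity bookkeeping in the normalisations, not the algebra. One must check three things: that translating $p_0$ to the origin is harmless because the rank is translation invariant; that the scaling substitution in Step 1 is a genuine linear isomorphism of kernels, which needs $t_i\neq 0$; and that the induced $q$ is generic, or at least attains the generic rank of $\mathcal{R}_d$. If the transcendence argument for $q$ feels delicate, I would fall back on lower semicontinuity of rank. This gives the forward direction from any single good placement, and for the converse I would choose $p$ generic first and read off $q$ from it.
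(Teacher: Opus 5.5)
Your argument is correct and complete. The paper gives no proof of this statement; it cites Whiteley. What you have written is essentially the classical argument:
\begin{itemize}
\item pin the apex at the origin;
\item slide each vertex along its ray through the apex onto the hyperplane $x_{d+1}=1$ (your Step 1, which is where $t_i\neq 0$ is needed);
\item identify the apex-fixed infinitesimal motions of the cone with the infinitesimal motions of $(G,q)$ via $u_i=(w_i,-q_i\cdot w_i)$, which yields $\rank R_{G^*}(p)=\rank R_G(q)+n$.
\end{itemize}

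The genericity bookkeeping you were worried about goes through directly, so the semicontinuity fallback is not needed. Two small corrections to how you phrase it:
\begin{itemize}
\item The translated configuration with $p_0=0$ is no longer generic. The right justification for translating is that $R(p)$ depends only on the differences $p_i-p_j$, so the matrix is literally unchanged. Independence of $\mathcal{R}_{d+1,n+1}$ from the generic choice is not the reason.
\item The count of $(d+1)n$ algebraically independent coordinates applies to $p_1-p_0,\dots,p_n-p_0$, not to $p$ itself, which has $(d+1)(n+1)$. These differences are algebraically independent because, together with the coordinates of $p_0$, they generate $\mathbb{Q}(p)$.
\end{itemize}

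The $(d+1)n$ numbers $t_i,q_{i,k}$ generate the same field over $\mathbb{Q}$ as these differences, since $(p_i-p_0)_{d+1}=t_i$ and $(p_i-p_0)_k=t_iq_{i,k}$. They are therefore algebraically independent as well. In particular $q$ is generic in $(\mathbb{R}^d)^n$, which is exactly what Step 3 needs.
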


Coning also preserves circuits between rigidity matroids.

\begin{lemma}[{\cite[Lem.~5.3]{Garamvölgyi_Gortler_Jordán_2022}}]\label{lem:rigidcone2}
    A graph is a circuit in $\mathcal{R}_d$ if and only its cone is a circuit in $\mathcal{R}_{d+1}$.
\end{lemma}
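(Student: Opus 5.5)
This lemma is stated with a citation, so the plan is only to derive it from the neighbouring results in the paper. The statement has two directions.

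\emph{Circuit in $\mathcal{R}_d$ implies cone is a circuit.} Let $C$ be a circuit in $\mathcal{R}_d$ with vertex set $V$, and let $\hat C$ denote its cone with apex $o$. Since $C$ is dependent in $\mathcal{R}_d$, \Cref{thm:rigidcone1} shows that $\hat C$ is dependent in $\mathcal{R}_{d+1}$. For minimality, take any edge $e$ of $\hat C$. If $e \in C$, then $\hat C - e$ is the cone of $C-e$ together with any edges $ov$ for vertices $v$ that became isolated; in all cases it is a subgraph of the cone of $C-e$. Since $C-e$ is independent, its cone is independent by \Cref{thm:rigidcone1}, and hence so is $\hat C - e$.

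The case $e = ov$ is the real obstacle. Here I would use a projective-type argument on the cone framework. Place the apex at the origin and the other vertices at $(q_i,1)$. Then the rows of $R$ for the edges $oi$ fix the radial component. Quotienting by these rows identifies the remaining row space with the $d$-dimensional rigidity matrix of the points $q_i$. This is the standard proof of \Cref{thm:rigidcone1}, which I would simply re-derive.

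In this picture, a stress on $\hat C$ restricts to a stress on $C$, and the coefficients on the cone edges $oi$ are determined by the stress on $C$ (they are essentially $-\sum_j \omega_{ij}$). Since $C$ is a circuit, it carries a unique stress up to scaling, and that stress is nonzero on every edge of $C$. So $\hat C$ also has a unique stress up to scaling, nonzero on $C$. Removing $ov$ and adding it back does not change the rank when $v$ has degree at least $d+1$ in $C$, because then the rank of $\hat C$ exceeds that of $\hat C - ov$ only if the stress vanishes on $ov$. I therefore need the stress coefficient on $ov$ to be nonzero. The cleaner route is a rank count: $\operatorname{rank}(\hat C) = \operatorname{rank}(C) + |V|$ by the quotient argument, and equally for $\hat C - ov$ once the radial direction at $v$ is freed. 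The difficulty is to show this extra freedom is generically nonzero. I expect this to be the hard part, and the simplest fix is to pick generic points so that no cancellation occurs.

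\emph{Cone is a circuit implies circuit in $\mathcal{R}_d$.} Suppose $\hat C$ is a circuit in $\mathcal{R}_{d+1}$. Then $\hat C$ is dependent, so $C$ is dependent by \Cref{thm:rigidcone1}. If $C$ were not a circuit, it would contain a proper circuit $C'$. By the first direction, the cone of $C'$ is then a circuit properly contained in $\hat C$, which is a contradiction.
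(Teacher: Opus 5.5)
\textbf{There is a genuine gap: the case $e=ov$ is not proved, and the realisation you propose is one where it fails.}

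Your $e\in C$ case is fine. Your converse direction is also fine, and it does not need the first direction: a proper dependent $C'\subsetneq C$ has a dependent cone by \Cref{thm:rigidcone1}, and that cone is a proper subset of $\hat C$.

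The case $e=ov$ is the only part of the lemma not already contained in \Cref{thm:rigidcone1}, and you leave it as ``pick generic points so that no cancellation occurs''. Take your picture $\hat p_o=0$ and $\hat p_i=(q_i,1)$. The last coordinate of the equilibrium condition at a vertex $i$ reads $\sum_j\sigma_{ij}(1-1)+\sigma_{io}\cdot 1=0$. So every stress of $\hat C$ vanishes on all cone edges. In that realisation the cone edges are coloops, and $\hat C-ov\supseteq C$ is dependent.

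For example, take $d=1$ and $C=K_3$. The three points become collinear, the triangle carries the stress and the spokes carry none, whereas generically $K_4$ has a nowhere-zero stress. This realisation has the correct rank on cones of subgraphs, which is why it proves \Cref{thm:rigidcone1}. It does not realise $\mathcal{R}_{d+1}$ on sets such as $\hat C-ov$.

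Consequently:
\begin{itemize}
\item your formula ``essentially $-\sum_j\omega_{ij}$'' for the cone-edge coefficients is wrong in your own setup;
\item the remark about the degree of $v$ is beside the point, since only $\sigma_{ov}\neq 0$ matters;
\item the rank count ``equally for $\hat C-ov$'' just restates what has to be shown.
\end{itemize}

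\textbf{The missing idea is to leave the hyperplane by moving the points along their rays.} Take $\hat p_i=t_i(q_i,1)$ with $(q,t)$ generic. The coordinates $(t_iq_i,t_i)$ are then algebraically independent, and placing the apex at $0$ is harmless by translation invariance. Solving the equilibrium at $i$ as above shows that $\omega_{ij}:=t_it_j\sigma_{ij}$ is a stress of $(C,q)$. The map $\sigma\mapsto\omega$ is a bijection of stress spaces, since equilibrium at $o$ is automatic. Moreover,
\[
\sigma_{io}=-s_i\sum_{j\in N_C(i)}\omega_{ij}(s_j-s_i),\qquad s_i:=1/t_i .
\]
Since $C$ is a circuit, its unique stress $\omega$ is nonzero on every edge. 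So this is a nonzero polynomial in $s$ (the coefficient of $s_is_j$ is $-\omega_{ij}$), and hence it is nonzero for generic $t$. Thus $\hat C$ has a one-dimensional stress space with full support, i.e.\ it is a circuit. This handles all edges at once.

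The paper does not prove this lemma; it quotes it from Garamv\"olgyi--Gortler--Jord\'an. As it stands, your proposal establishes only the part that follows from \Cref{thm:rigidcone1}.
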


\subsection{Hyperconnectivity matroids}

For some choice of positive integers $d,n$,
choose a generic vector $p = (p_i)_{i \in [n]} \in (\mathbb{R}^d)^n$.
Let $H(p)$ be the $\binom{n}{2} \times dn$ matrix with columns labelled $(i,k) \in [n] \times [d]$ ordered lexicographically and rows labelled by $\binom{n}{2}$ (with any order), where row $ij$ is of the form
\begin{equation*}
    \big[
     ~ \mathbf{0} ~ \cdots ~  \mathbf{0}  ~ \overbrace{p_j}^{i} ~ 
    \mathbf{0} ~  \cdots  ~ \mathbf{0}  ~ \overbrace{- p_i}^{j} ~ 
    \mathbf{0}  ~ \cdots  ~ \mathbf{0}  ~ 
    \big]
\end{equation*}
Kalai \cite{Kalai1985} defined the \emph{$d$-hyperconnectivity matroid} $\mathcal{H}_{d,n}$ on $K_n$ to be the row matroid of the matrix $H(p)$ for a generic choice of $p$.
Analogous to rigidity matroids, hyperconnectivity matroids are invariant under the choice of generic $p$.

Whilst the description of $\mathcal{H}_{d,n}$ is close to that of $\mathcal{R}_{d,n}$,
the two matroids do differ for $d \geq 2$.
This difference can be seen from comparing the following lemma to \Cref{lem:bipcircrigsmall}.

\begin{lemma}[Kalai {\cite[Thm 6.1]{Kalai1985}}]\label{lem:bipcirchyp}
    For all integers $d \geq 2$ and $n \geq 2d+2$, every copy of $K_{d+1,d+1}$ is a circuit in $\mathcal{H}_{d,n}$.
\end{lemma}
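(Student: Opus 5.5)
The plan is to compute the space of linear dependencies among the rows of $H(p)$ indexed by a copy of $K_{d+1,d+1}$ directly, and to show that this space is one-dimensional and spanned by a vector with full support. A set of rows whose dependency space is one-dimensional and spanned by a vector with no zero entries is a circuit of the row matroid. Indeed, it is dependent, and any dependency of a proper subset would be a nonzero dependency of the whole set vanishing on some row, which is impossible. Since $H(p)$ is generic and any copy of $K_{d+1,d+1}$ can be relabelled to any other, it suffices to treat one copy with bipartition $A,B$, where $|A|=|B|=d+1$. The rows of this copy only have nonzero entries in the columns of vertices in $A\cup B$, so $n$ plays no role beyond $n \geq 2d+2$.

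Write a dependency as $\omega=(\omega_{ab})_{a\in A,b\in B}$, that is, a $(d+1)\times(d+1)$ matrix $\Omega$. Reading off the definition of row $ab$ in $H(p)$, the condition $\sum \omega_{ab}\,\mathrm{row}_{ab}=0$ splits over vertices. At each $a\in A$ the column block gives $\sum_{b\in B}\omega_{ab}p_b=0$ (up to the sign convention, which depends only on the orientation of the edge), and at each $b\in B$ it gives $\sum_{a\in A}\omega_{ab}p_a=0$. Let $P_A,P_B$ be the $(d+1)\times d$ matrices whose rows are the points of $A$ and of $B$. These conditions then read $\Omega P_B=0$ and $\Omega^{T}P_A=0$.

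By genericity, any $d$ of the points are linearly independent, so $P_A$ and $P_B$ have rank $d$. Their left kernels are therefore each one-dimensional, spanned by vectors $\mu_A,\mu_B\in\mathbb{R}^{d+1}$, and every entry of $\mu_A$ and $\mu_B$ is nonzero: a zero entry would give a linear dependence among only $d$ generic points. The condition $\Omega P_B=0$ forces every row of $\Omega$ to be a multiple of $\mu_B^{T}$, and $\Omega^TP_A=0$ forces every column to be a multiple of $\mu_A$. Hence $\Omega=c\,\mu_A\mu_B^{T}$. Conversely, every such matrix is a dependency, so the dependency space is exactly one-dimensional.

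Every entry $c(\mu_A)_a(\mu_B)_b$ is nonzero when $c\neq 0$, which gives the full support needed above. This shows that $K_{d+1,d+1}$ is a circuit of $\mathcal{H}_{d,n}$. The main point requiring care is the sign bookkeeping in translating the row form $[\,p_j \text{ at } i,\ -p_i \text{ at } j\,]$ into the two conditions above. Whatever the signs turn out to be, they only rescale $\omega_{ab}$ by $\pm1$ uniformly per edge. This does not affect the rank of the solution space or its support, so the argument is robust to this issue.
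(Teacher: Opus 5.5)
Your proof is correct and complete. The sign bookkeeping is harmless, since rescaling each $\omega_{ab}$ by $\pm1$ is a support-preserving isomorphism of dependency spaces. Genericity gives one-dimensional left kernels of $P_A$ and $P_B$ with nowhere-zero generators. Finally, a matrix whose rows are all multiples of $\mu_B^{T}$ and whose columns are all multiples of $\mu_A$ must equal $c\,\mu_A\mu_B^{T}$.

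The paper itself gives no argument for this lemma; it imports the statement from Kalai's Theorem~6.1. Judging from how the paper cites that theorem again in the open problems section, it is a statement about the hyperconnectivity rank of bipartite graphs in general. So the difference is citation versus a self-contained computation. The citation is shorter and brings the general bipartite information with it. Your computation makes the lemma elementary and exhibits the unique dependency explicitly, $\omega_{ab}=\pm c\,(\mu_A)_a(\mu_B)_b$.

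Your method also recovers that general information at no extra cost. For parts of sizes $m,m'\geq d$, the dependency space of $K_{m,m'}$ is $\ker P_A^{T}\otimes\ker P_B^{T}$, of dimension $(m-d)(m'-d)$. Hence $K_{m,m'}$ has rank $d(m+m')-d^2$, which is strictly below $d(m+m')-\binom{d+1}{2}$ when $d\geq 2$. This is the rank deficiency behind the paper's later remark that bipartite graphs cannot be bases of $\mathcal{H}_{2,n}$.
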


\subsection{Cofactor matroids}

For any $x,y \in \mathbb{R}$,
define the length-$d$ row vector
\begin{equation*}
    c_d(x,y)
    := 
    \begin{bmatrix}
        ~x^{d-1} & x^{d-2} y & \cdots &  x y^{d-2}   & y^{d-1}~
    \end{bmatrix}.
\end{equation*}
Now take a generic vector $p = (x_i,y_i)_{i \in [d]} \in (\mathbb{R}^2)^n$.
Let $C_d(p)$ be the $\binom{n}{2} \times dn$ matrix with columns labelled $(i,k) \in [n] \times [d]$ ordered lexicographically and rows labelled by $\binom{n}{2}$ (with any order), where row $ij$ is of the form
\begin{equation*}
    \big[
     ~ \mathbf{0} ~ \cdots ~  \mathbf{0}  ~ \overbrace{c_d(x_i-x_j,y_i-y_j)}^{i} ~ 
    \mathbf{0} ~  \cdots  ~ \mathbf{0}  ~ \overbrace{- c_d(x_i-x_j,y_i-y_j)}^{j} ~ 
    \mathbf{0}  ~ \cdots  ~ \mathbf{0}  ~ 
    \big]
\end{equation*}
Whiteley \cite{Whiteley1996} defined the \emph{$C_{d-1}^{d-2}$-cofactor matroid} on $K_n$ to be the row matroid of the matrix $C_d(p)$ for a generic choice of $p$.
In the spirit of \cite{bulavka2026maximalmatroidscounterexamples},
we opt to call this matroid the \emph{$d$-cofactor matroid},
and we denote the matroid by $\mathcal{C}_{d,n}$.
Analogous to rigidity matroids and hyperconnectivity matroids, cofactor matroids are invariant under the choice of generic $p$.

While the $d$-rigidity matroid is maximal when $d \leq 2$, it was proven that the 3-cofactor matroid is the unique maximal abstract 3-rigidity matroid.

\begin{theorem}[Clinch, Jackson and Tanigawa \cite{Clinch2022Abstract}]\label{thm:maximalityconj}
    If $M$ is an abstract $d$-rigidity matroid on $K_n$ for $d \in \{1,2,3\}$, then $M \preceq \mathcal{C}_{d,n}$.
\end{theorem}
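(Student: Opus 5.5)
For $d \in \{1,2\}$ the statement reduces to results already recorded. When $d=1$ every abstract $1$-rigidity matroid is the graphical matroid of $K_n$, and so is $\mathcal{C}_{1,n}$, so there is nothing to prove. When $d=2$ we have $\mathcal{C}_{2,n} = \mathcal{R}_{2,n}$, and \Cref{thm:maximal} gives $M \preceq \mathcal{R}_{2,n}$. The substance is therefore the case $d=3$. I would follow the Clinch--Jackson--Tanigawa strategy: find a combinatorial rank formula for $\mathcal{C}_{3,n}$, then show that this formula is an upper bound on the rank in every abstract $3$-rigidity matroid.

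\textbf{Step 1 (upper bound in any $M$).} Let $M$ be an abstract $3$-rigidity matroid on $K_n$. Call $X \subseteq [n]$ with $|X| \geq 3$ \emph{rigid in $M$} if $K_X$ is spanned by $E(X)$; by \Cref{thm:lamanish} the complete graph $K_X$ has rank at most $3|X|-6$. The key closure property is that if $X,Y$ are rigid and $|X \cap Y| \geq 3$, then $X \cup Y$ is rigid. I would prove this from the $K_5$-circuit axiom by repeatedly closing edges $xy$ with $x \in X\setminus Y$, $y \in Y \setminus X$, using a $K_5$ on $x,y$ and three common vertices, all of whose other edges already lie in the closure. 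From submodularity of $r_M$ and these two facts I would derive, for any edge set $E$ and any cover $\mathcal{X}=\{X_1,\ldots,X_k\}$ of $E$ by vertex sets,
\begin{equation*}
r_M(E) \leq \sum_{i} \big(3|X_i| - 6\big) - \sum_{v} \text{(overlap correction at } v),
\end{equation*}
with overlaps handled by the same inclusion--exclusion that appears in the cofactor rank formula. The corrections are justified by the rank bound on pairwise intersections: by \Cref{thm:lamanish} a set with at most two shared vertices contributes at most $3|X_i \cap X_j| - 3$ or less.

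\textbf{Step 2 (the cofactor side).} Next I would show that $r_{\mathcal{C}_{3,n}}(E)$ equals the minimum of the right-hand side over all such covers. The inequality $\leq$ follows from Step 1, since $\mathcal{C}_{3,n}$ is itself an abstract $3$-rigidity matroid. The inequality $\geq$ is the hard direction, and I expect it to be the main obstacle. My first approach would be an inductive construction of $\mathcal{C}_{3,n}$-independent (and rigid) graphs by $0$- and $1$-extensions, vertex splitting and $X$-replacement, which are known to preserve $C^1_2$-cofactor independence, applied to a minimal counterexample. A tight cover $\mathcal{X}$ would be built from maximal rigid subgraphs, and a Henneberg-type reduction would be shown to exist whenever the bound is not attained.

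Finally, I would combine the two steps. For every $E$ we get $r_M(E) \leq \min_{\mathcal{X}} (\cdots) = r_{\mathcal{C}_{3,n}}(E)$. In particular, every $M$-independent set satisfies $|E| = r_M(E) \leq r_{\mathcal{C}_{3,n}}(E)$ and is therefore $\mathcal{C}_{3,n}$-independent, which gives $M \preceq \mathcal{C}_{3,n}$.
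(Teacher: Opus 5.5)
\textbf{Verdict: there is a genuine gap.} The case $d=3$ is never actually proved, and Step~1 is not yet a valid argument either.

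\textbf{What is correct.} Your reductions for $d\in\{1,2\}$ are fine. The gluing step (closing $xy$ via a $K_5$ on $x,y$ and three common vertices) is correct. The final deduction is also sound: a bound valid in every abstract $3$-rigidity matroid and attained by $\mathcal{C}_{3,n}$ gives $r_M\le r_{\mathcal{C}_{3,n}}$ pointwise, hence $M\preceq\mathcal{C}_{3,n}$.

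\textbf{Step~2 is missing.} The inequality $r_{\mathcal{C}_{3,n}}(E)\ge\min_{\mathcal{X}}(\cdots)$ for \emph{every} edge set $E$ carries the entire weight of the theorem. It amounts to a combinatorial min--max formula for the rank function of $\mathcal{C}_{3,n}$, from which maximality would follow at once. You give no reason why a cover built from maximal rigid subgraphs should have value exactly $r_{\mathcal{C}_{3,n}}(E)$; that needs control of how the hinges between rigid components interact, which is the crux. You also give no reason why a Henneberg-type reduction must exist when the bound is not attained. Producing independent rigid graphs by $0$-/$1$-extensions, vertex splits and X-replacements says nothing directly about the rank of an arbitrary dependent, non-rigid $E$.

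Only non-rigid circuits matter. A $\mathcal{C}_{3,n}$-circuit $C$ with $|C|=3|V(C)|-5$ is dependent in every $M$ by \Cref{thm:lamanish}. So the whole difficulty sits on flexible circuits such as the double banana.

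The paper itself only cites this theorem. In Clinch, Jackson and Tanigawa's proof, the key new ingredient is Whiteley's conjecture that double V-replacement preserves $\mathcal{C}_{3,n}$-independence, an operation missing from your list. The combinatorial rank characterisation of $\mathcal{C}_{3,n}$ that your Step~2 asserts was obtained only in their follow-up paper, building on the maximality theorem. So Step~2 is at least as hard as the statement it is meant to prove.

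\textbf{Step~1 has its own problems.} The formula is left unspecified. For $d=3$ the corrections cannot be indexed by vertices: two sets meeting in a single vertex have $K_{X_i}\cap K_{X_j}=\emptyset$ and give no saving, so savings come only from shared pairs (hinges).

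More seriously, the justification runs in the wrong direction. Submodularity $r_M(A\cup B)\le r_M(A)+r_M(B)-r_M(A\cap B)$ gives an upper bound only from a \emph{lower} bound on $r_M(A\cap B)$. \Cref{thm:lamanish}, by contrast, only bounds ranks from above. For two sets sharing exactly two vertices the intersection is one edge of rank $1$, which is harmless. But suppose you glue the sets of a cover one at a time, with distinct sets sharing at most two vertices. Then the intersection of $K_{X_i}$ with the union of the earlier $K_{X_j}$ is the set of hinge edges inside $X_i$. If that set is dependent in $M$, subtracting one unit per repeated hinge is unjustified.

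To make Step~1 valid you would have to restrict the admissible covers, or the gluing order. You would then need to prove Step~2 for that same restricted class, which is precisely the hard characterisation problem.
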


It is conjectured that $\mathcal{R}_{3,n} = \mathcal{C}_{3,n}$ for all $n \geq 3$.
It is, however, known that $\mathcal{C}_{d,n} \not\preceq \mathcal{R}_{d,n}$ when $d\geq 4$,
which can be seen by comparing the following result to \Cref{lem:bipcircrig}.

\begin{lemma}[Whiteley; see {\cite[\S 11.4]{Whiteley1996}}]\label{lem:bipcirccof}
    For all integers $d \geq 4$ and $n \geq 2d+4$, every copy of $K_{d+2,d+2}$ is independent in $\mathcal{C}_{d,n}$.
\end{lemma}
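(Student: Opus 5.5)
The plan is to prove independence at one conveniently chosen position (a limit of generic ones) plus a first-order perturbation. This suffices, since the rank of $C_d(p)$ restricted to a fixed edge set can only drop under specialisation. Identify the row vector $c_d(x,y)$, after rescaling coordinates by binomial coefficients, with the binary form $(xs+yt)^{d-1}$ of degree $d-1$. A linear dependence among the rows of a copy of $K_{d+2,d+2}$, with parts $U=\{u_1,\dots,u_{d+2}\}$ and $V=\{v_1,\dots,v_{d+2}\}$, is then a \emph{stress} $\omega=(\omega_{ij})$ satisfying, at every vertex $u_i$,
\begin{equation*}
\sum_j \omega_{ij}\big((x_{u_i}-x_{v_j})s+(y_{u_i}-y_{v_j})t\big)^{d-1}=0,
\end{equation*}
and the analogous identity at every vertex $v_j$. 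I must show that the only such $\omega$ is zero for some choice of $p$ (or for generic $p$, via a limiting argument).

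\textbf{Step 1 (special position).} Place all of $U$ on the line $x=0$ with distinct heights $a_i$, and all of $V$ on the line $x=1$ with distinct heights $b_j$. At each vertex the forms become $(\pm s+(a_i-b_j)t)^{d-1}$. Extracting coefficients and eliminating triangularly in $k$ turns the vertex conditions into
\begin{equation*}
\sum_j\omega_{ij}b_j^k=0 \ \text{ for all } i, \qquad \sum_i\omega_{ij}a_i^k=0 \ \text{ for all } j, \qquad 0\le k\le d-1.
\end{equation*}
The Vandermonde vectors on $d+2$ distinct points have a $2$-dimensional orthogonal complement, say $S_b$ and $S_a$ respectively. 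Hence the stress space at this position is exactly $S_a\otimes S_b$, which is $4$-dimensional. So this position has rank deficiency $4$, which is not yet enough on its own.

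\textbf{Step 2 (perturbation).} Perturb the $x$-coordinates to $x_{v_j}=1+\varepsilon e_j$ and $x_{u_i}=\varepsilon f_i$ for generic $e,f$. Suppose a generic stress exists. Then there is an analytic family $\omega(\varepsilon)$ with $\omega(0)\neq 0$ in $S_a\otimes S_b$. Differentiating the vertex equations at $\varepsilon=0$ gives a first-order solvability condition: the derivative terms must lie in the image of the Step 1 equilibrium map. Pairing against the cokernel, which is computable from the same Vandermonde structure, yields a bilinear system in $\omega(0)\in S_a\otimes S_b$ whose coefficients involve the $e_j,f_i$ and the $(d-1)$-st derivative terms $(d-1)(a_i-b_j)^{k}$. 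I would show that for generic $e,f$ this $4\times 4$ (or larger) system has only the trivial solution when $d\ge 4$. If the first-order condition alone leaves some kernel, I would also use a second perturbation of the $y$-coordinates away from the two lines.

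\textbf{Main obstacle.} The obstacle is the computation in Step 2, which must also explain why $d\ge4$ is needed: for $d\le 3$ the statement fails, since $C_{d,n}$ is then at most $\mathcal{R}_{d,n}$-like and $K_{d+2,d+2}$ is dependent. The count $(d+2)^2\le d(2d+4)-\binom{d+1}{2}$ does hold for $d\ge4$, so no counting obstruction arises. If the perturbation computation becomes unwieldy, my fallback is an inductive route. This would build $K_{d+2,d+2}$ from a small independent graph by $0$-extensions, $1$-extensions and vertex splits, all of which preserve independence in $\mathcal{C}_d$ by Whiteley, handling the base case $d=4$ by explicit computation and passing from $d$ to $d+1$ by a coning-type step.
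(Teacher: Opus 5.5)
The paper does not prove this lemma; it only cites Whiteley. Your argument therefore has to stand on its own, and it does not. Step 1 is correct. Everything then rests on the Step 2 claim, which you only announce, and in its first-order form that claim is false.

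Let $\alpha\in S_a$ be the vector, unique up to scale, with $\sum_i\alpha_i a_i^k=0$ for all $0\le k\le d$. Let $\beta\in S_b$ be the analogous vector for the $b_j$. Dehomogenise at $s=-1$. The cokernel of the Step 1 equilibrium map is then parametrised by polynomials $\Phi(a,b)$ of degree at most $d-1$ in each variable: the functional sends $(1+(a_i-b)t)^{d-1}$ at $u_i$ to $\Phi(a_i,b)$, and $(1+(a-b_j)t)^{d-1}$ at $v_j$ to $-\Phi(a,b_j)$. For the perturbation $\dot x_{u_i}=f_i$, $\dot x_{v_j}=e_j$, the first-order solvability condition on $\omega=\omega(0)$ works out to
\[
\sum_{i,j}(f_i-e_j)(a_i-b_j)\,\omega_{ij}\,\Theta(a_i,b_j)=0 \qquad\text{for all } \Theta=(\partial_a+\partial_b)\Phi .
\]
Take $\omega=\alpha\otimes\beta$.
\begin{itemize}
\item The $f_i$-terms vanish, because $b\mapsto(a_i-b)\Theta(a_i,b)$ has degree at most $d$ and so is killed by $\beta$.
\item The $e_j$-terms vanish symmetrically, killed by $\alpha$.
\end{itemize}
So $\alpha\otimes\beta$ passes the first-order test for \emph{every} choice of $e,f$ and every $d$.

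Perturbing the $y$-coordinates does not help either. It keeps all points on the lines $x=0$ and $x=1$, where the $4$-dimensional stress space simply persists, so it imposes no condition. There is also no way to move ``the $y$-coordinates away from the lines''; leaving the lines is exactly your $x$-perturbation. Consistently with this, no first-order argument could distinguish $d=3$, where $K_{5,5}$ is dependent by counting, from $d\ge 4$.

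To make this route work you would need one of the following, and none is in the proposal:
\begin{itemize}
\item a genuine second-order (or higher) analysis, in which $\omega'(0)$ is determined only modulo $S_a\otimes S_b$ and that freedom has to be carried along;
\item a different special position;
\item an explicit rank certificate at a specific point, which would only handle one value of $d$ at a time.
\end{itemize}
The inductive fallback is not a proof either. $K_{d+2,d+2}$ is $(d+2)$-regular, so it cannot be completed by a $0$- or $1$-extension, and you do not say which vertex splits would produce it. Moreover, coning $K_{d+2,d+2}$ yields a graph with a dominating vertex, not $K_{d+3,d+3}$, so the step from $d$ to $d+1$ is unsupported.
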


\section{Construction of abstract rigidity matroids}\label{sec:main}

\subsection{Acceptable graphs}

We now wish to determine sufficient conditions on $G$ for $\mathcal{M}_G(N)$ to be an abstract rigidity matroid.
Before we do this, we require the following lemmas.

\begin{lemma}[see {\cite[Lem.~2.5.6]{GraverServatius}}]\label{lem:0-ext}
    Let $M$ be an abstract $d$-rigidity matroid on $K_n$ and let $G=(V,E)$ be a subgraph of $K_n$.
    Suppose that $G$ has a vertex $x$ of degree $k \leq d$,
    and set $E' = \{uv \in E : x \notin \{u,v\}\}$.
    Then $\rank_M(E) = \rank_M(E') + k$.
\end{lemma}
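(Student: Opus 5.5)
We must show that removing a vertex $x$ of degree $k\le d$ from $G$ lowers the rank by exactly $k$. Write $F$ for the set of $k$ edges at $x$, so $E = E' \cup F$. Submodularity of the rank function gives $\rank_M(E) \le \rank_M(E') + k$ immediately. The work is in the reverse inequality $\rank_M(E) \ge \rank_M(E') + k$.

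Suppose the reverse inequality fails. Choose a basis $B'$ of $E'$ in $M$. Then $B' \cup F$ is dependent, so it contains a circuit $C$ of $M$ that uses some edge at $x$. Because $B'$ is independent, $C$ meets $F$, and $x \in V(C)$ has degree at most $k \le d$ in $C$. So it suffices to prove that no circuit of an abstract $d$-rigidity matroid has a vertex of degree at most $d$.

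I would prove this using the $K_{d+2}$ circuits together with circuit elimination.

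1. Let $C$ be a circuit with a vertex $x$ of degree $j \le d$ in $C$. Let $xu_1,\dots,xu_j$ be its edges at $x$.
2. Let $W$ be the vertex set of a copy of $K_{d+2}$ on $x$, $u_1,\dots,u_j$, together with $d+1-j$ further vertices. Such vertices exist when $n \ge d+2$. If $n < d+2$, then the rank formula forces $M$ to be free, since $dn - \binom{d+1}{2} = \binom{n}{2}$ for $n \in \{d, d+1\}$, and the claim is trivial.
3. Work in the contraction $M/(C - \{xu_1\})$. There $xu_1$ is a loop. The copy $K_W$ is a circuit of $M$, so its image in the contraction is a union of circuits.

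A cleaner route is to use Graver's closure characterisation of abstract rigidity matroids, which Nguyen showed is equivalent to the definition used here. One of its axioms states: if $E_1, E_2$ are closed sets and $|V(E_1)\cap V(E_2)| < d$, then $\mathrm{cl}(E_1\cup E_2) \subseteq K_{V(E_1)}\cup K_{V(E_2)}$. Apply this with $E_1 = \mathrm{cl}(C - \partial x)$, taken inside $K_{V(C)\setminus x}$, and with $E_2$ the closure of the star at $x$, which lies inside $K_{\{x,u_1,\dots,u_j\}}$. These two vertex sets meet in $\{u_1,\dots,u_j\}$, which has fewer than $d$ vertices when $j<d$. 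Then each edge $xu_i$ cannot lie in the closure of the remaining edges, a contradiction.

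The case $j=d$ needs the second Graver axiom instead: $x$ is attached by $d$ edges to a closed set spanning $V\setminus x$. Here I would argue via rank counting in $K_{V(C)}$: adding a vertex of degree $d$ raises the rank by exactly $d$ in a rigid closure.

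The main obstacle is that I am assuming this equivalence of definitions. Since the statement is attributed to Graver--Servatius, the cleanest proof is to cite the closure axioms via Nguyen's equivalence and run the argument above.
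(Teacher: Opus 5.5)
Your reduction is sound. If $\rank_M(E) < \rank_M(E')+k$, then $B'\cup F$ is dependent, and any circuit $C$ inside it has $1\le \deg_C(x)\le d$, so it suffices to rule out such circuits. The paper gives no proof of its own here and only cites Graver--Servatius. Appealing to Graver's closure axioms via Nguyen's equivalence is legitimate, since the paper relies on that equivalence too.

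The gap is in how you use the first closure axiom. Applied to $C-\partial x$ and the \emph{whole} star $\{xu_1,\dots,xu_j\}$, it only gives $\mathrm{cl}(C)\subseteq K_{V(C)\setminus x}\cup K_{\{x,u_1,\dots,u_j\}}$. That set contains every $xu_i$, so it says nothing about whether $xu_i\in\mathrm{cl}(C-xu_i)$, and the contradiction you announce does not follow. This misapplication is also why $j=d$ looks like a separate case to you, and your treatment of that case is circular. The claim ``adding a vertex of degree $d$ raises the rank by exactly $d$'' is precisely the statement with $k=d$, which is the case used in the proof of \Cref{lem:extendable_graph}. Nor can the second axiom be invoked, because $C-\partial x$ need not be rigid on $V(C)\setminus x$. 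Your steps 1--3 likewise stop before any contradiction is reached.

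The missing step is to apply the axiom to $C-xu_i$ itself. In Graver's form the axiom holds for arbitrary edge sets, not only closed ones. Take $E_1=C-\partial x$ and $E_2=\partial x\setminus\{xu_i\}$. Then $V(E_1)\subseteq V(C)\setminus\{x\}$ and $V(E_2)\subseteq\{x\}\cup\{u_l: l\neq i\}$ meet in at most $j-1\le d-1$ vertices. Hence $\mathrm{cl}(C-xu_i)\subseteq K_{V(C)\setminus x}\cup K_{\{x\}\cup\{u_l:l\neq i\}}$, which does not contain $xu_i$. This handles all $1\le j\le d$ at once and needs no second axiom.

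If you prefer to avoid Nguyen's equivalence entirely, there is a direct route from the paper's definition.
\begin{itemize}
\item For $n\in\{d,d+1\}$ the rank formula makes $M$ free, and the lemma is trivial. So assume $n\ge d+2$ and set $W=[n]\setminus\{x\}$.
\item The $K_{d+2}$ circuits show that $K_W$ is spanned by $d(n-1)-\binom{d+1}{2}$ edges, and that $K_W$ together with any $d$ edges at $x$ spans $K_n$.
\item The rank hypothesis then forces $\rank_M(K_W\cup S)=\rank_M(K_W)+|S|$ for every set $S$ of at most $d$ edges at $x$.
\item Submodularity applied to $E'\cup S$ and $K_W$ yields $\rank_M(E'\cup S)\ge\rank_M(E')+|S|$.
\end{itemize}
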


\begin{lemma}\label{lem:extendable_graph}
    For $n \geq d+1$, let $H=([n],F)$ be the graph where for any $i<j$ we have that $ij \in F$ if and only if $j-i \leq d$.
    Then $F$ is a basis of any abstract $d$-rigidity matroid on $K_n$.
\end{lemma}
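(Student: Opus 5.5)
We prove \Cref{lem:extendable_graph} by induction on $n$, using \Cref{lem:0-ext} for the inductive step and the circuit axiom for the base case. Fix an arbitrary abstract $d$-rigidity matroid $M$ on $K_n$.

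First count the edges of $H$. Each vertex $j \leq d+1$ is joined to the $j-1$ vertices before it. Each vertex $j > d+1$ is joined to exactly $d$ earlier vertices, namely $j-d,\ldots,j-1$. Hence
\begin{equation*}
|F| = \binom{d+1}{2} + d(n-d-1) = dn - \binom{d+1}{2},
\end{equation*}
which equals $\rank M$. It therefore suffices to show that $F$ is independent in $M$, or equivalently that $\rank_M(F) = |F|$.

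\textbf{Base case.} Take $n=d+1$, so $H = K_{d+1}$. Every copy of $K_{d+2}$ is a circuit of $M$, and any two circuits are incomparable. So if $K_{d+1}$ contained a circuit $C$, then $C$ would be a proper subset of a copy of $K_{d+2}$ (any copy containing this $K_{d+1}$ and one further vertex, which exists when the ambient $n \geq d+2$), a contradiction. The remaining degenerate situation, where the ambient ground set is exactly $K_{d+1}$, is handled by the rank axiom alone: the rank is $d(d+1)-\binom{d+1}{2} = \binom{d+1}{2}$, which equals the number of edges. In either case $K_{d+1}$ is independent.

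\textbf{Inductive step.} For $n > d+1$, consider vertex $n$ in $H$. Its only neighbours are $n-d,\ldots,n-1$, so it has degree exactly $d$. Deleting it leaves the graph $H_{n-1}$ on $[n-1]$ given by the same rule. Applying \Cref{lem:0-ext} with $x=n$ and $k=d$ gives
\begin{equation*}
\rank_M(F) = \rank_M(F') + d,
\end{equation*}
where $F'$ is the edge set of $H_{n-1}$.

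We cannot directly apply the induction hypothesis to $M$ restricted to $K_{n-1}$, because that restriction need not have rank $d(n-1)-\binom{d+1}{2}$. So we instead run the argument within $M$ itself, on the nested subgraphs $H_{d+1} \subset H_{d+2} \subset \cdots \subset H_n$ of $K_n$. \Cref{lem:0-ext} applies to any subgraph $G$ of $K_n$, so at each stage adding a vertex of degree $d$ raises the rank by exactly $d$, which equals the number of edges added. Starting from $\rank_M(H_{d+1}) = \binom{d+1}{2}$, which holds because $K_{d+1}$ is independent in $M$ by the base case, we obtain by induction $\rank_M(H_m) = |E(H_m)|$ for every $m$. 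In particular $F$ is independent, and by the edge count it is a basis of $M$.

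The argument is essentially routine. The only point needing care is the base case: justifying that $K_{d+1}$ is independent using only the two defining axioms of an abstract $d$-rigidity matroid.
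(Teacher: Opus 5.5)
Your proof is correct and follows the paper's argument: build $H$ from $K_{d+1}$ by repeatedly adding vertices of degree $d$, apply \Cref{lem:0-ext} at each step, and conclude with the edge count $|F| = dn-\binom{d+1}{2}$. You also justify explicitly that $K_{d+1}$ is independent, which the paper leaves implicit; your justification is valid, and this also follows directly from \Cref{lem:0-ext}, since that lemma allows vertices of degree $k<d$.
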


\begin{proof}
    As $H$ is formed from a copy of $K_{d+1}$ by sequentially adding vertices of degree $d$,
    $F$ is a independent in any abstract $d$-rigidity matroid on $K_n$ by \Cref{lem:0-ext}.
    That $F$ is a basis now follows from the equality $|F| = dn - \binom{d+1}{2}$.
\end{proof}

We are now ready to state the following key lemma of the paper.
Here we denote edge addition and edge removal by $G+e$ and $G-e$ respectively.

\begin{lemma}\label{lem:circuit_condition}
    Let $d \geq 2$ and $n \geq d+3$ be integers, and let $N$ be a symmetric abstract $d$-rigidity matroid on $K_n$.
    Let $G=([n],E)$ be a graph satisfying the following:
    \begin{enumerate}
        \item $E$ is a basis of $N$; 
        \item $E \cup \{f\}$ is a circuit of $N$ for each $f \in E^c$; and
        \item $G-e+f$ is not isomorphic to $G$ for any $e \in E$ and $f \in E^c$.
    \end{enumerate}
    Then $\mathcal{M}_G(N)$ is a symmetric abstract $d$-rigidity matroid.
\end{lemma}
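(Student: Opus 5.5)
The plan is to define $\mathcal{M}_G(N)$ through its bases rather than verifying the circuit axioms directly, and only at the end to identify its circuit set with $\Circ_G(N)$. Let $\mathcal{S}$ be the set of copies of $E$ (images of $E$ under permutations of $[n]$). By symmetry of $N$ and hypothesis (1), each member of $\mathcal{S}$ is a basis of $N$, and by symmetry hypotheses (2) and (3) also hold for every copy $E'\in\mathcal{S}$ in place of $E$. Set $\mathcal{B} := \{\text{bases of } N\} \setminus \mathcal{S}$. The first step is to show that $\mathcal{B}$ satisfies the basis axioms.

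For non-emptiness, pick any $e\in E$ and $f\in E^c$. Then $E-e+f$ is a basis of $N$, since $E+f$ is a circuit by (2), and it is not a copy of $E$ by (3). For basis exchange, take $B_1,B_2\in\mathcal{B}$ and $x\in B_1\setminus B_2$. Exchange in $N$ gives some $y\in B_2\setminus B_1$ with $B_1-x+y$ a basis of $N$; if this is not in $\mathcal{S}$ we are done. Otherwise $E':=B_1-x+y\in\mathcal{S}$, and we argue by cases.
\begin{itemize}
\item If $B_2\setminus B_1=\{y\}$, then $B_2=E'\in\mathcal{S}$, which is a contradiction.
\item Otherwise choose $y'\in B_2\setminus B_1$ with $y'\ne y$. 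Then $B_1-x+y'=E'-y+y'$ with $y'\notin E'$. By (2) applied to $E'$, the set $E'+y'$ is a circuit, so $E'-y+y'$ is a basis of $N$. By (3) it is not a copy of $E$, so it lies in $\mathcal{B}$.
\end{itemize}
Hence $\mathcal{B}$ is the basis set of a matroid $M$, and $M$ has rank $dn-\binom{d+1}{2}$.

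Next I will identify the independent sets of $M$: they are exactly the independent sets of $N$ other than the members of $\mathcal{S}$. Let $I$ be independent in $N$ with $I\notin\mathcal{S}$. If $I$ is a basis, it lies in $\mathcal{B}$. Otherwise extend $I$ to a basis $B$ of $N$. If $B\in\mathcal{S}$, pick $e\in B\setminus I$ (possible because $I\subsetneq B$) and $f\notin B$. Then $B-e+f\supseteq I$ is a basis of $N$ by (2) and is not in $\mathcal{S}$ by (3). The circuits of $M$ then follow.
\begin{itemize}
\item Each copy $E'$ is dependent in $M$, while every proper subset of $E'$ is independent in $M$, so $E'$ is a circuit.
\item Any other minimal dependent set cannot contain a copy of $E$, since it would then properly contain a circuit. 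It is therefore dependent in $N$ with all proper subsets independent in $N$, i.e.\ a circuit of $N$ containing no copy of $E$.
\end{itemize}
Conversely, circuits of $N$ containing no copy of $E$ stay minimal dependent in $M$, because their proper subsets are not copies. This shows the circuit set of $M$ is exactly $\Circ_G(N)$, so $M=\mathcal{M}_G(N)$.

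Finally, I will check the abstract rigidity axioms and symmetry. The rank is already correct. For the $K_{d+2}$ condition, note that $E$ is a basis and so spans $[n]$, because every vertex has degree at least one in a basis (otherwise an edge at that vertex could be added). Since $n\ge d+3>d+2$, no copy of $K_{d+2}$ contains a copy of $E$. Thus every copy of $K_{d+2}$, being a circuit of $N$, is in $\Circ_G(N)$. Symmetry holds because $\mathcal{B}$ is invariant under vertex permutations, as both the bases of $N$ and $\mathcal{S}$ are. I expect the only delicate step to be the exchange axiom, specifically the case where $N$-exchange lands on a copy of $E$; this is exactly where hypotheses (2) and (3) are used.
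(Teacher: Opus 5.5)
Your proof is correct, but it takes a different route from the paper's.

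\textbf{The paper's route.} It works directly with the family $\Circ_G(N)$ and verifies circuit elimination, reducing to finding a circuit of $N$ inside $(C_1\cup C_2)\setminus\{e\}$. The only problematic case is when $C_1$ is a copy of $E$ and $|C_1\cup C_2| = dn-\binom{d+1}{2}+1$. There, hypothesis (2) rules out $C_2$ being a circuit of $N$, and hypothesis (3) rules out $C_2$ being another copy of $E$. The rank is then obtained separately: the paper exhibits the graph $F$ of Lemma~\ref{lem:extendable_graph} and uses a $K_{d+2}$ argument to show $F$ is not a copy of $E$.

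\textbf{Your route.} You delete the copies of $E$ from the basis family of $N$ and verify basis exchange. The problematic case is when the $N$-exchange lands on a copy of $E$, and you handle it with the same two hypotheses. You then recover the independent sets and circuits and match them with $\Circ_G(N)$. This costs you an extra identification step, but it buys several things:
\begin{itemize}
\item the rank is immediate;
\item the basis $E-e+f$ replaces the auxiliary graph $F$ entirely;
\item the remaining circuit axioms need no separate check;
\item you get the explicit description ``independent in $\mathcal{M}_G(N)$ iff independent in $N$ and not a copy of $E$''.
\end{itemize}
That last description is exactly what the paper relies on later, when it asserts $M_n\prec\mathcal{R}_{d,n}$ and that restrictions to non-spanning subgraphs agree with $N$.

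\textbf{One small point.} Your claim that a basis has no isolated vertex (``otherwise an edge at that vertex could be added'') tacitly uses the $0$-extension property of abstract rigidity matroids (Lemma~\ref{lem:0-ext}). A plain edge count would also suffice: $|E| = dn-\binom{d+1}{2} \geq \binom{d+2}{2}+d-1 > \binom{d+2}{2}$ for $n\geq d+3$ and $d\geq 2$, so no copy of $K_{d+2}$ can contain a copy of $E$.
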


\begin{proof}
    Choose any two intersecting distinct elements $C_1,C_2 \in \Circ_{G}(N)$ and choose any $e \in C_1 \cap C_2$.
    As our choice of $C_1,C_2,e$ is arbitrary,
    $\mathcal{M}_G(N)$ is a matroid if there exists $C \in \Circ_{G}(N)$ satisfying $C \subset (C_1 \cup C_2) \setminus \{e\}$.
    First, observe that if $(C_1 \cup C_2) \setminus \{e\}$ contains a circuit $C$ of $N$,
    then either $C \in \Circ_{G}(N)$, or $C$ contains an element of $\Circ_{G}(N)$ as a subset (namely, the copy of $E$ contained in $C$).
    Thus it suffices to show that $(C_1 \cup C_2) \setminus \{e\}$ contains a circuit $C$ of $N$.

    If $C_1,C_2$ are circuits of $N$ then, since $N$ is a matroid, $(C_1 \cup C_2) \setminus \{e\}$ contains a circuit $C$ of $N$.
    Thus we may suppose without loss of generality that $C_1=E$.
    If $|C_1 \cup C_2| \geq dn - \binom{d+1}{2} +2$ then the rank of $N$ being $dn-\binom{d+1}{2}$ implies the set $(C_1 \cup C_2) \setminus\{e\}$ is dependent in $N$, and so contains a circuit $C$ of $N$.
    We now suppose for contradiction that $|C_1 \cup C_2| < dn - \binom{d+1}{2} +2$.
    Since $C_1=E$ and $C_2 \not\subset C_1$, we thus have that $|C_1 \cup C_2| = dn - \binom{d+1}{2} +1$.
    Fix $e'$ to be any element of $C_1 \setminus C_2$ and $f'$ to be the unique element in $C_2 \setminus C_1$.
    If $C_2$ is a circuit of $N$ not containing a copy of $E$, then it is the unique circuit of $N$ contained in $E \cup \{f'\}$.
    However, this implies $E \cup \{f'\} = C_2$ by assumption, which contradicts that $C_2$ doesn't contain a copy of $E$.
    If $C_2$ is a copy of $E$,
    then $C_2 = C_1 \setminus \{e' \} \cup \{f'\}$.
    However, this contradicts that $G -e'+f' \not\cong G$ for any edge $e'$ and non-edge $f'$. 
    
    The above shows that $\mathcal{M}_{G}(N)$ is a matroid, and it is symmetric by construction.
    We still need to show that $\mathcal{M}_{G}(N)$ is an abstract $d$-rigidity matroid. 
    Firstly, since $n \geq d+3$,
    every copy of $K_{d+2}$ is still a circuit in $\mathcal{M}_G(N)$.
    Let $H=([n],F)$ be the graph described in \Cref{lem:extendable_graph}.
    Here we note that, given $s$ connects the vertices $1$ and $d+1$, the graph $F \cup \{s\}$ is not a circuit of $N$ since it contains a copy of $K_{d+2}$.
    Thus $H$ cannot be isomorphic to $G$.
    By construction this implies $F$ is a basis of $\mathcal{M}_G(N)$, 
    and so $M$ has rank $dn - \binom{d+1}{2}$.
    Thus $\mathcal{M}_{G}(N)$ is an abstract $d$-rigidity matroid as required.
\end{proof}

We now define any graph satisfying all three conditions given in \Cref{lem:circuit_condition} to be \emph{$N$-acceptable}.
We note here that if $M,N$ are abstract $d$-rigidity matroids on $K_n$ satisfying $N \preceq M$,
then every $N$-acceptable graph is also $M$-acceptable;
to see this, observe that condition (i) follows from $\rank M = \rank N$, condition (ii) follows from spanning circuits of $N$ being spanning circuits of $M$, and (iii) is independent of the choice of matroid.

As the criteria for a graph to be $N$-acceptable can be difficult to work with, we will sometimes opt to work with the following softening of the third property.
Here we denote the degree of a vertex $v$ in $G$ by $\deg_G(v)$.

\begin{lemma}\label{lem:easy_acceptable}
    Let $G=([n],E)$ be a graph where $|\deg_G(u) - \deg_G(v)| \neq 1$ for all $u,v \in [n]$.
    Then $G-e+f$ is not isomorphic to $G$ for any $e \in E$ and $f \in E^c$.
\end{lemma}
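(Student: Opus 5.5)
The plan is to argue by contradiction using the multiset of vertex degrees, which is an isomorphism invariant. Suppose $G' := G-e+f \cong G$ for some $e = ab \in E$ and $f = xy \in E^c$. Then the degree sequences of $G$ and $G'$ agree as multisets, so for every integer $k$ the number of vertices of degree $k$ is the same in both graphs. I will use this to force a pair of vertices in $G$ whose degrees differ by exactly one, contradicting the hypothesis.

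First I record how the degrees change. Removing $e$ lowers the degrees of $a$ and $b$ by one, and adding $f$ raises the degrees of $x$ and $y$ by one. Since $e \neq f$, the sets $\{a,b\}$ and $\{x,y\}$ are distinct, though they may share one vertex. If they share a vertex, say $a = x$, then $\deg(a)$ is unchanged, $\deg(b)$ drops by one and $\deg(y)$ rises by one, with $b \neq y$. If they are disjoint, then two vertices drop by one and two rise by one. In either case the set $D$ of vertices whose degree decreases is nonempty, and every vertex outside $D$ and the set $I$ of increasing vertices keeps its degree.

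Next I take $k$ to be the maximum of $\deg_G(v)$ over $v \in D$, and compare the number of vertices of degree $k$ in $G$ and in $G'$. In passing from $G$ to $G'$, the vertices of degree exactly $k$ that lie in $D$ leave degree class $k$, and at least one such vertex exists by the choice of $k$. No vertex of $D$ enters class $k$, since vertices in $D$ have degree at most $k$ in $G$ and hence at most $k-1$ in $G'$. Vertices outside $D \cup I$ stay where they are. So for the count of class $k$ to be preserved, some vertex $w \in I$ must enter class $k$, which means $\deg_G(w) = k-1$. Then $w$ and the vertex $v \in D$ with $\deg_G(v) = k$ satisfy $|\deg_G(v) - \deg_G(w)| = 1$, which is the desired contradiction.

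The main obstacle is bookkeeping rather than any real difficulty. I need to check that the vertices leaving class $k$ cannot be compensated by anything other than an increasing vertex coming from degree $k-1$. This includes the case where $D$ has two vertices of degree $k$, and the case where a vertex of $I$ itself has degree $k$ and leaves the class. Neither case causes trouble, because each only increases the number of vertices leaving class $k$, and the only possible entrants are still vertices of $I$ with degree $k-1$.
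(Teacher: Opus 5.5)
Your argument is correct and uses the same invariant as the paper, the degree multiset. The paper, however, needs no counting over degree classes: since $e \neq f$, some endpoint $u$ of $e$ is not an endpoint of $f$, so $\deg_{G-e+f}(u) = \deg_G(u)-1$, and by hypothesis no vertex of $G$ has this degree, so the multisets already differ. Your maximality bookkeeping is valid and gives a slightly sharper conclusion: an isomorphism $G-e+f \cong G$ would force an endpoint of $e$ and an endpoint of $f$ whose degrees in $G$ differ by exactly one.
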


\begin{proof}
    Choose any $e=uv \in E$ and any $f \in E^c$.
    Then, without loss of generality, $\deg_{G-e+f}(u) = \deg_G(u) - 1$.
    Our assumption now implies that $\deg_{G-e+f}(u) \neq \deg_G(x)$ for all $x \in [n]$.
    As the degree sequence of $G-e+f$ cannot match the degree sequence of $G$, the pair $G$ and $G-e+f$ are not isomorphic.
\end{proof}

Coning also preserves admissibility in some specific cases.

\begin{lemma}\label{lem:acceptable_cone}
    Let $d \geq 2$.
    If $G$ is $\mathcal{R}_{d,n}$-acceptable and contains no vertex of degree $n-2$,
    then the cone of $G$ is $\mathcal{R}_{d+1,n+1}$-acceptable and contains no vertex of degree $n-1$.
\end{lemma}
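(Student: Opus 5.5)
Let $G=([n],E)$ be $\mathcal{R}_{d,n}$-acceptable with no vertex of degree $n-2$. Let $\hat G=([n+1],\hat E)$ be its cone, with cone vertex $n+1$, so $\hat E = E\cup\{\{i,n+1\}: i\in[n]\}$. I will check the three conditions of \Cref{lem:circuit_condition} for $\hat G$ with $N=\mathcal{R}_{d+1,n+1}$, and then the degree claim. Throughout I use that $\mathcal{R}_{d,n}$ and $\mathcal{R}_{d+1,n+1}$ are symmetric. So a statement proved for the particular cone labelling transfers to every copy.

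\emph{Conditions (i) and (ii).} By \Cref{thm:rigidcone1}, $\hat E$ is independent in $\mathcal{R}_{d+1,n+1}$. It is a basis because
\[
|\hat E| = dn-\tbinom{d+1}{2}+n = (d+1)(n+1)-\tbinom{d+2}{2}.
\]
Every non-edge of $\hat G$ is a non-edge $f\in E^c$ of $G$, since the cone vertex is adjacent to everything. The set $E\cup\{f\}$ is a circuit of $\mathcal{R}_{d,n}$, so its cone $\hat E\cup\{f\}$ is a circuit of $\mathcal{R}_{d+1,n+1}$ by \Cref{lem:rigidcone2}. One subtlety: coning applies to graphs as vertex sets together with edges. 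Here the vertex set is all of $[n]$, since $E$ spans $[n]$ (a basis with $n\ge d+1$ has no isolated vertex), so the cone really is $\hat E\cup\{f\}$.

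\emph{Degree claim.} In $\hat G$ the cone vertex has degree $n$, and each $i\in[n]$ has degree $\deg_G(i)+1$. So a vertex of degree $n-1$ in $\hat G$ would be an original vertex with $\deg_G(i)=n-2$, which is excluded by hypothesis. Hence $\hat G$ has no vertex of degree $(n+1)-2=n-1$.

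\emph{Condition (iii), the main step.} Suppose $\hat G-e+f\cong\hat G$ with $e\in\hat E$ and $f\in\hat E^c$. Then $f$ is an edge of $K_n$, and I split into two cases according to $e$.

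\begin{enumerate}
\item \textbf{Case $e=\{i,n+1\}$ is a cone edge.} In $\hat G-e+f$, vertex $n+1$ has degree $n-1$. Its other vertices have degrees $\deg_G(j)+1$, with two of them shifted by one. An isomorphism must carry this vertex to a vertex of degree $n-1$ in $\hat G$, and none exists by the degree claim. This is a contradiction.
\item \textbf{Case $e\in E$.} Then $\hat G-e+f$ is the cone of $G-e+f$, and $n+1$ is still a dominating vertex. Let $\varphi$ be an isomorphism from $\hat G-e+f$ to $\hat G$.
\begin{enumerate}
\item If $\varphi(n+1)=n+1$, then $\varphi$ restricts to an isomorphism $G-e+f\cong G$, contradicting acceptability of $G$.
\item Otherwise $\varphi(n+1)$ is a vertex of degree $n$ in $\hat G$ other than $n+1$. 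That would be an original vertex of degree $n-1$ in $G$, dominating $G$. Then the automorphism of $\hat G$ swapping it with $n+1$ lets me compose to an isomorphism fixing $n+1$, which reduces to the previous subcase.
\end{enumerate}
\end{enumerate}
Handling this possibility of several dominating vertices is the part I expect to need the most care. The swap works because any two dominating vertices of a graph are interchangeable by an automorphism.
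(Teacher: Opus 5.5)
Your proof is correct and follows the same route as the paper. Conditions (i) and (ii) come from Whiteley's coning theorem and the circuit-coning lemma, and condition (iii) uses the absence of degree-$(n-1)$ vertices in the cone to rule out $e$ being a cone edge, then reduces to $G-e+f\cong G$. Your subcase handling a second dominating vertex, by composing with the transposition that swaps it with $n+1$, makes explicit a step the paper asserts without comment; it is genuinely needed when the lemma is iterated, since the cone vertex from the previous step is dominating.
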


\begin{proof}
    Let $G'=([n+1],E')$ be the cone of $G$.
    By \Cref{thm:rigidcone1} and \Cref{lem:rigidcone2},
    the edge set $E'$ is a basis of $\mathcal{R}_{n+1,d+1}$ and the edge set $E'\cup \{f\}$ is a circuit of $\mathcal{R}_{n+1,d+1}$ for all $f \in \binom{[n+1]}{2} \setminus E' = \binom{[n]}{2} \setminus E$.
    Additionally, as no vertex of $G$ has degree $n-2$, no vertex of $G'$ has degree $n-1$.

    Suppose now, for the sake of contradiction, that $G'-e+f$ is isomorphic to $G'$ for some edge $e \in E'$ and non-edge $f \in \binom{[n+1]}{2} \setminus E' = \binom{[n]}{2} \setminus E$.
    As no vertex of $G$ has degree $n-2$, and thus no vertex of $G'$ has degree $n-1$, 
    the edge $e$ cannot be adjacent to the cone vertex $n+1$.
    However, this now implies that $G-e+f$ is isomorphic to $G$, which contradicts that $G$ is $\mathcal{R}_{d,n}$-acceptable.
\end{proof}

\begin{remark}\label{rem:haccept}
    It is tempting to posit that a graph $G=([n],E)$ with $n \geq 4$ will be $\mathcal{H}_{2,n}$-acceptable if it is a base of the 2-hyperconnectivity matroid, contains no $(2,3)$-tight proper subgraphs with 4 or more vertices, and $G-e+f$ is not isomorphic to $G$ for any $e \in E$ and $f \in E^c$.
    However, $(2,3)$-tight graphs can be dependent in the 2-hyperconnectivity matroid (for example, $K_{3,3}$),
    and so any such graph can still fail the second condition outlined in \Cref{lem:circuit_condition}.
\end{remark}

\subsection{Constructing even-order \texorpdfstring{$\mathcal{R}_{2,n}$}{R2n}-acceptable graphs}\label{subsec:even}

By \Cref{thm:laman}, a graph $G=([n],E)$ with $n \geq 4$ is $\mathcal{R}_{2,n}$-acceptable if and only if it is $(2,3)$-tight, contains no $(2,3)$-tight proper subgraphs with 4 or more vertices, and $G-e+f$ is not isomorphic to $G$ for any $e \in E$ and $f \in E^c$.
Our aim now is to construct an infinite family of $\mathcal{R}_{2,n}$-acceptable graphs with no vertex of degree $n-2$.
With this, we can apply \Cref{lem:acceptable_cone} to prove \Cref{mainthm}.
We will first construct even-order $\mathcal{R}_{2,n}$-acceptable graphs.

For the following lemma, we recall that a graph is a \emph{Minimally Isostatic Graph (MIG)} if it is $(2,3)$-tight and contains no $(2,3)$-tight proper subgraphs with 3 or more vertices.
We will also opt to denote our bipartite graphs by $G=(X,Y,E)$, with $(X,Y)$ denoting a specific bipartite partition of the vertex set.

\begin{lemma}\label{lem:1-ext}
    Let $G=(X,Y,E)$ be a bipartite graph satisfying the following conditions:
    \begin{enumerate}
        \item $G$ is a MIG;
        \item $G$ has exactly two vertices $x^*,y^*$ of degree $\Delta_G$, and they are contained in $X$ and $Y$ respectively; and
        \item $|\deg_G(u) - \deg_G(v)| \neq 1$ for all $u,v \in X \cup Y$.
    \end{enumerate}
    Then there exists a bipartite graph with $|X|+|Y|+2$ vertices satisfying the above conditions.
\end{lemma}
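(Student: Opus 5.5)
The plan is to build the new graph $G'$ from $G$ by one ``double extension'' that adds one vertex to each side. I pick an edge $uv\in E$ with $u\in X\setminus\{x^*\}$ and $v\in Y\setminus\{y^*\}$. Such an edge should exist: a MIG on at least $4$ vertices is $2$-connected, so $G-\{x^*,y^*\}$ still contains an edge, and one endpoint of that edge lies in each side. Then I set
\[
V(G')=X\cup Y\cup\{a,b\},\qquad E(G')=(E\setminus\{uv\})\cup\{au,\,ax^*,\,bv,\,by^*,\,ab\},
\]
with $a$ placed in $Y$ and $b$ placed in $X$. Each new edge joins $X\cup\{b\}$ to $Y\cup\{a\}$, so $G'$ is bipartite. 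Since we remove one edge and add five, $|E(G')|=|E|+4=2(|V|+2)-3$. This construction can be seen as a $1$-extension on $uv$ followed by a $0$-extension, rearranged to keep bipartiteness.

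The degrees change as follows. The vertices $u$ and $v$ each lose one edge and gain one, so their degrees are unchanged. The vertices $x^*$ and $y^*$ rise to $\Delta_G+1$, and the new vertices $a,b$ have degree $3$. By condition (iii) no other vertex has degree $\Delta_G-1$, so every other vertex has degree at most $\Delta_G-2$. Hence $x^*$ and $y^*$ are the unique vertices of maximum degree $\Delta_G+1$, one in each side, and their degree differs from all other degrees by at least $3$ (assuming $\Delta_G\ge 4$, which holds for large graphs). Condition (ii) therefore holds. For condition (iii) it remains to check that no vertex of $G'$ has degree $2$ or $4$, since such a vertex would clash with $a$ and $b$.

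\emph{Main obstacle.} The issue is that $G$ might have degree-$2$ or degree-$4$ vertices. If $G$ has a degree-$3$ vertex, condition (iii) already excludes degrees $2$ and $4$, and we are done. If $G$ has no degree-$3$ vertex, I would first try to modify the gadget so that $a$ and $b$ receive a degree already absent-by-one from all degrees of $G$. For instance, $a$ could be joined to several vertices of $X$ while $b$ is joined to only $y^*$ together with $a$, adjusting so the edge count stays $+4$. If that fails, I would look for a choice of $uv$ whose endpoints absorb the problem. This case analysis on the degree sequence of $G$ is the step I expect to need the most care.

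Finally I would show that $G'$ is a MIG. Suppose $H\subsetneq G'$ is $(2,3)$-tight with at least $3$ vertices. Note that $G-uv$ has no tight subgraph on $\ge 3$ vertices, because $G$ is a MIG. If $a,b\notin V(H)$, then $H\subseteq G-uv$, which is impossible. If only $a\in V(H)$, then $\deg_H(a)\le 2$. When $\deg_H(a)=2$, the graph $H-a$ is tight in $G-uv$, so it must be the single edge $\{u,x^*\}$; but $ux^*$ is not an edge since both lie in $X$, a contradiction. When $\deg_H(a)\le1$, the graph $H-a$ violates sparsity. The case where only $b\in V(H)$ is symmetric. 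If both $a,b\in V(H)$, let $H_0=H-\{a,b\}$, and add $uv$ to $H_0$ when $u,v\in V(H)$. Counting, $H_0$ loses at most $4$ edges, and then it is either tight in $G$ and proper (contradicting that $G$ is a MIG) or it violates sparsity. The only possible exception is $H_0=G$, in which case $H=G'$, contradicting properness. Tightness of $G'$ itself follows from the same count together with $|E(G')|=2|V(G')|-3$.
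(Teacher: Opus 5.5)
\textbf{Gap: the degree obstacle is left open, but it cannot arise.} Your gadget is exactly the paper's construction, two consecutive 1-extensions, with the paper's $a,b$ being your $b,a$. Your direct counting argument that $G'$ is a MIG is a valid substitute for the paper's minimal-counterexample argument, except for small cases you gloss over. When $|V(H_0)|\le 2$, being ``tight in $G$'' is no contradiction, since a single edge is tight. You need bipartiteness there instead: $|V(H)|=3$ would force a triangle, and $|V(H)|=4$ would need $5$ edges in a bipartite graph on $4$ vertices.

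The genuine gap is the step you flag as the main obstacle. As written, the case ``$G$ has no degree-$3$ vertex'' is left unresolved, with only speculative repairs. The missing idea is that this case never occurs.

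Let $G$ be a MIG on $|V|\ge 4$ vertices. A vertex $w$ of degree at most $1$ would leave $G-w$ with at least $2(|V|-1)-2$ edges, violating sparsity. A vertex $w$ of degree $2$ would make $G-w$ a $(2,3)$-tight proper subgraph on $|V|-1\ge 3$ vertices, contradicting the MIG property. Hence $\delta(G)\ge 3$. Since the degree sum is $4|V|-6<4|V|$, some vertex has degree exactly $3$. Condition (iii) then forbids degrees $2$ and $4$ anywhere in $G$, so your new degree-$3$ vertices $a,b$ create no clash.

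This is precisely the observation the paper opens its proof with. It also settles your hedge ``assuming $\Delta_G\ge 4$, which holds for large graphs''. We have $\Delta_G\neq 4$ by (iii). If $\Delta_G=3$, then every vertex would have maximum degree, contradicting (ii) since $|V|\ge 4$. So $\Delta_G\ge 5$, and $\Delta_G+1$ is at distance at least $3$ from every other degree in $G'$. The same fact also makes the existence of $uv$ immediate: any $w\in X\setminus\{x^*\}$ has at least two neighbours in $Y$, so at least one of them is not $y^*$. By contrast, ``2-connected'' alone does not give an edge after deleting two vertices.

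Your fallback ideas would not have worked anyway. Any variant that gives a new vertex degree $2$ cannot be a MIG, by the argument above.

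Finally, you and the paper both tacitly need $|V(G)|\ge 4$. Bipartiteness excludes $K_3$, but $G=K_2$ satisfies (i)--(iii) literally. The conclusion then fails, because a bipartite graph on $4$ vertices has at most $4<5$ edges, so no bipartite MIG on $4$ vertices exists.
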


\begin{proof}
    First, observe that $G$ must have minimum degree 3, since it is a MIG with at least 4 vertices (as the single MIG with three vertices, $K_3$, is not bipartite).
    Choose any edge $xy$ with $x \in X \setminus \{x^*\}$ and $y \in Y \setminus\{y^*\}$.
    (Such an edge must exist, as otherwise $G$ is a forest and thus not a MIG.)
    We now perform two consecutive 1-extensions to form a new graph $G' = (X',Y',E')$ as follows:
    (1) delete the edge $xy$, then add the vertex $a$ and edges $ax,ay,ay^*$;
    (2) delete the edge $ax$, then add the vertex $b$ and edges $ab,bx,bx^*$.
    It is easy to check that the new graph $G'$ is bipartite and satisfies conditions (ii) and (iii).
    
    Suppose for the sake of contradiction that $G'$ is not a MIG.
    Then there exist $A \subseteq X$ and $B \subseteq Y$ such that $A \cup B \neq X \cup Y$ and the subgraph $H' \subset G'$ induced by $A \cup B$ is $(2,3)$-tight.
    We suppose now that from the family of all such pairs $(A,B)$ inducing a $(2,3)$-tight subgraph with at least 3 vertices, we chose $(A,B)$ to minimise $|A|+|B|$.
    With this, we can assume that $H'$ has minimum degree 3;
    if not, then we could always choose a strictly smaller subgraph, with the knowledge that, as $H'$ is bipartite, this process cannot reduce the graph to a single edge.
    If $a \notin A$ and $b \notin B$, then $H'$ is contained in $G$, contradicting that $G$ is a MIG.
    Suppose, without loss of generality, that $a \in A$.
    Since the minimal degree of $H'$ is 3,
    $B$ must contain all of the neighbours of $a$, namely $b,y,y^*$.
    We now apply the same logic to see that $x,x^* \in A$.
    Now fix the subgraph $H \subset G$ induced by the vertex set $(A\setminus\{a\}) \cup (B \setminus \{b\})$.
    As $H'$ is $(2,3)$-tight, the graph $H$ must also be $(2,3)$-tight.
    Since $G$ is a MIG, we have that $H$ has two vertices, contradicting that $\{x,y,x^*,y^*\} \subseteq (A\setminus\{a\}) \cup (B \setminus \{b\})$.
\end{proof}

\begin{figure}
    \centering
    \begin{tikzpicture}[scale=2]
			\node[vertex] (1b) at (0,0) {};
			\node[vertex] (2b) at (1,0) {};
			\node[vertex] (3b) at (2,0) {};
			\node[vertex] (4b) at (3,0) {};
			\node[vertex] (5b) at (4,0) {};

			\node[vertex] (1t) at (0,1) {};
			\node[vertex] (2t) at (1,1) {};
			\node[vertex] (3t) at (2,1) {};
			\node[vertex] (4t) at (3,1) {};
			\node[vertex] (5t) at (4,1) {};

			\draw[edge] (1b)edge(1t);
			\draw[edge] (1b)edge(2t);
			\draw[edge] (1b)edge(3t);
			\draw[edge] (1b)edge(4t);
			\draw[edge] (1b)edge(5t);
			\draw[edge] (2b)edge(1t);
			\draw[edge] (3b)edge(1t);
            \draw[edge] (4b)edge(1t);
			\draw[edge] (5b)edge(1t);
            
			\draw[edge] (2b)edge(2t);
			\draw[edge] (2b)edge(3t);
			\draw[edge] (3b)edge(2t);

			\draw[edge] (3b)edge(4t);
			\draw[edge] (4b)edge(3t);

			\draw[edge] (5b)edge(4t);
			\draw[edge] (4b)edge(5t);
            
			\draw[edge] (5b)edge(5t);
		\end{tikzpicture}
    \caption{A bipartite graph satisfying conditions (i), (ii) and (iii) in \Cref{lem:1-ext}.}
    \label{fig:bipartite}
\end{figure}

The bipartite graph in \Cref{fig:bipartite} satisfies conditions (i), (ii) and (iii) of \Cref{lem:1-ext}.
Applying \Cref{lem:1-ext} now generates an infinite family of bipartite $\mathcal{R}_{2,n}$-acceptable graphs; one for each $n = 2k + 8$ with $k$ being a positive integer.
Since each graph is bipartite,
they cannot contain a vertex of degree $n-2$;
this is as the only bipartite graph with minimum degree at least 2 with a vertex of degree $n-2$ is $K_{2,n}$, which is not $(2,3)$-tight (and hence not a basis of $\mathcal{R}_{2,n}$).

\subsection{Constructing odd-order \texorpdfstring{$\mathcal{R}_{2,n}$}{R2n}-acceptable graphs}\label{subsec:odd}

We now construct odd-order $\mathcal{R}_{2,n}$-acceptable graphs.

Let $n\geq 3$. We define $G_{2n + 1}$ to be the following graph with vertex set $[2n+1]$ and edges described on the right (with edge colouring added for visual clarity):
\begin{center}
    
\begin{tikzpicture}[scale=0.7]
\begin{scope}
\coordinate (v1)    at (-2,-2);
\coordinate (vlast) at ( 2,-2);

\foreach \name/\angle in {
    2/180, 3/157, 4/134, 5/110,
    a/70, b/35, c/0
}{
    \coordinate (v\name) at (\angle:3);
}

\draw[edge] (v2) -- (v3) -- (v4) -- (v5);
\draw[edge] (va) -- (vb) -- (vc);

\draw[edge] (v5) --
    node[fill=white, inner sep=4pt] {$\cdots$}
    (va);

\draw[bedge] (v2) -- (vc);   
\draw[bedge] (v3) -- (vb);   

\foreach \target in {3,5,b}{
    \draw[oedge] (v1) -- (v\target);
}

\foreach \target in {2,4,a,c}{
    \draw[pedge] (vlast) -- (v\target);
}

\node[vertex, label=below:{$1$}]       at (v1)    {};
\node[vertex, label=below:{$2n+1$}]    at (vlast) {};

\foreach \name/\angle/\lab in {
    2/180/2,
    3/157/3,
    4/134/4,
    5/110/5,
    a/70/{2n-2},
    b/35/{2n-1},
    c/0/{2n}
}{
    \node[vertex, label={\angle:{$\lab$}}] at (v\name) {};
}
\end{scope}
\begin{scope}[xshift=300]
    \node[align=left] at (0,0) {
    \textcolor{colgraphe}{$(i,i+1)$ for $ i \in \{2, \dots, 2n-1\}$}  \\ \textcolor{myblue}{$(2, 2n), (3, 2n-1)$} \\ \textcolor{myorange}{$(1, i)$ for $i \in \{3, 5, 7, \ldots, 2n-1\}$} \\ \textcolor{mypurple}{$(2n+1, i)$ for $i \in \{2, 4, 6, \ldots, 2n\}$}};
\end{scope}
\end{tikzpicture}
\end{center}

We now prove that each $G_{2n+1}$ is $\mathcal{R}_{2,2n+1}$-acceptable using the following two lemmas:

\begin{lemma}\label{lem:specialodd}
    For $n\geq 6$, the graph $G_{2n+1}$ is $(2,3)$-tight and contains no proper $(2,3)$-tight subgraph with 4 or more vertices.
\end{lemma}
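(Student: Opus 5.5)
The first step is to check that $G_{2n+1}$ has the right number of edges. The vertices $2,\dots,2n$ carry the path $2,3,\dots,2n$ ($2n-2$ edges). The two blue edges $(2,2n)$ and $(3,2n-1)$ turn it into a $\theta$-graph $T$ on $2n-1$ vertices with $2n$ edges. Vertex $1$ contributes $n-1$ edges and vertex $2n+1$ contributes $n$ edges. This gives $4n-1 = 2(2n+1)-3$ edges in total. It therefore remains to show that every vertex set $S$ with $4 \le |S| < 2n+1$ induces at most $2|S|-4$ edges. This also covers arbitrary edge subsets $F$, since $F$ lies in the subgraph induced by $V(F)$. Write $C = S \cap \{2,\dots,2n\}$ and let $e(C)$ be the number of edges of $T$ induced by $C$. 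I will use two parity facts. Path edges of $T$ join vertices of opposite parity, while the chord $(2,2n)$ is even--even and the chord $(3,2n-1)$ is odd--odd. Also, $1$ sees exactly the odd vertices of $C$ and $2n+1$ sees exactly the even vertices of $C$. I then split into four cases according to $S \cap \{1,2n+1\}$.

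\emph{Case $1,2n+1 \notin S$.} Here $i(S)=e(C) \le |C|+\mathrm{cyc}(C)$, where $\mathrm{cyc}(C) \le 2$ is the cycle rank of $T[C]$. The cycles of $T$ are $2,3,\dots,2n,2$ (length $2n-1$), $3,4,\dots,2n-1,3$ (length $2n-3$) and $2,3,2n-1,2n,2$ (length $4$). So if $|C|\le 5$, then $T[C]$ has cycle rank at most $1$ and $e(C)\le |C|+1 \le 2|C|-4$ once $|C|\ge 5$. For $|C|=4$, $e(C)\le 4 = 2|C|-4$. If $|C|\ge 6$, the crude bound $|C|+2\le 2|C|-4$ suffices.

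\emph{Case $1,2n+1 \in S$.} Here $i(S)=e(C)+|C|$, and we need this to be at most $2|S|-4 = 2|C|$, i.e.\ $e(C)\le |C|$. But $e(C)=|C|+1$ forces $\mathrm{cyc}(T[C])=2$. This means $C$ contains two independent cycles of the $\theta$-graph, hence all of $T$, hence $S$ is everything. So proper $S$ is fine.

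\emph{Case exactly one of $1,2n+1$ in $S$, say $1$ (the other is symmetric).} Then $i(S)=e(C)+|C_{\rm odd}|$, and I need $e(C)+|C_{\rm odd}|\le 2|C|-2$ whenever $|C|\ge 3$. I will argue component by component over the components $K$ of $T[C]$. If $K$ is a tree, then $e(K)+|K_{\rm odd}| \le 2|K|-1$, with equality only if $K$ is entirely odd. An all-odd connected set must be a single vertex or the chord $\{3,2n-1\}$; otherwise $e(K)+|K_{\rm odd}|\le 2|K|-2$. If $K$ contains a cycle, it contains one of the three cycles above. Each of these contains at least two even vertices (for $n\ge 6$, many), so $|K_{\rm odd}|\le |K|-2$ and $e(K)\le |K|+\mathrm{cyc}(K)$. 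For two cycles, $K=T$ has $n-1$ odd vertices, which is far below the bound. Summing over components gives $e(C)+|C_{\rm odd}|\le 2|C|-\#\{\text{components}\}$, with slack from each non-all-odd component. The bound $\le 2|C|-2$ then fails only if $C$ is a single all-odd component, i.e.\ $|C|\le 2$ and $|S|\le 3$. For $2n+1$ the roles of parity swap, with the even chord $\{2,2n\}$ playing the role of $\{3,2n-1\}$.

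The main obstacle is this third case, specifically the bookkeeping for components containing a cycle. Here the hypothesis $n\ge 6$ should enter, by guaranteeing that every cycle of $T$ through a given chord contains enough vertices of the opposite parity. I would first try the crude estimate $|K_{\rm odd}|\le |K|-2$ together with $\mathrm{cyc}\le 1$ for proper subsets. If that is not tight enough, I would fall back on explicitly checking the 4-cycle $2,3,2n-1,2n$, which has exactly two odd and two even vertices.
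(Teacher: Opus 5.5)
Your proof is correct, and it takes a genuinely different route from the paper's.

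\textbf{The paper's route.} The paper argues by minimal counterexample. A smallest proper $(2,3)$-tight subgraph $H$ on at least $4$ vertices must have minimum degree $3$, since a $4$-vertex one would be two triangles sharing an edge. It is therefore induced. Every vertex of $\{4,\dots,2n-2\}$ has degree exactly $3$ in $G_{2n+1}$, so $H$ must contain all neighbours of each such vertex it contains. This propagates along the path and forces $H$ to contain everything except possibly $2$ and $2n$. The three remaining candidates are then ruled out by an edge count, and a short separate argument excludes $H$ avoiding $\{4,\dots,2n-2\}$.

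\textbf{Your route.} You bound the induced edge count of every vertex set directly, splitting on $S\cap\{1,2n+1\}$. You use that $G_{2n+1}-\{1,2n+1\}$ is a theta graph of cycle rank $2$. Parity then makes the hub contributions exactly $|C|$, $|C_{\mathrm{odd}}|$ or $|C_{\mathrm{even}}|$.

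\textbf{Trade-offs.} The paper's proof is shorter, because the minimum-degree reduction and the degree-$3$ propagation do most of the work. Yours needs no reduction step. It proves sparsity and the absence of tight proper subgraphs at once, via the explicit bound $i(S)\le 2|S|-4$. It also shows exactly where the size hypothesis enters, which answers your worry about where $n\ge 6$ is used: only $n\ge 4$ is needed. That condition ensures the cycle $3,4,\dots,2n-1$ has at least two even vertices. For $n=3$ that cycle is a triangle and $\{1,3,4,5\}$ induces $K_4$ minus an edge.

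\textbf{One inequality to tighten.} The bound $e(K)\le |K|+\mathrm{cyc}(K)$ is too weak for the conclusions you draw from it:
\begin{itemize}
\item For a connected unicyclic component it gives only $e(K)+|K_{\mathrm{odd}}|\le 2|K|-1$. So it does not supply the ``slack'' you need when $C$ is a single such component.
\item In your first case with $|C|=4$ it gives $5$, not $4$.
\item In your second case, $e(C)=|C|+1$ would only force $\mathrm{cyc}\ge 1$, not $\mathrm{cyc}=2$.
\end{itemize}
What you are implicitly using everywhere is the exact identity $e(C)=|C|-c(C)+\mathrm{cyc}(C)$, where $c(C)$ is the number of components. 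For connected $K$ this reads $e(K)=|K|-1+\mathrm{cyc}(K)$. With it, a unicyclic component gives $e(K)+|K_{\mathrm{odd}}|\le |K|+(|K|-2)=2|K|-2$. Every step then goes through as written, and the fallback check of the $4$-cycle is unnecessary.
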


\begin{proof}
    Suppose for contradiction that $G_{2n+1}$ does not have the stated properties. Choose any proper non-trivial subgraph $H$ which is $(2,3)$-tight and contains at least 4 vertices;
    additionally, we may suppose that we chose $H$ to have as few vertices as possible.
    As $H$ is $(2,3)$-tight and not a single edge, it has minimal degree either 2 or 3.
    As removing a vertex of degree 2 preserves $(2,3)$-sparsity, either $H$ has minimal degree 3 or $H$ has exactly 4 vertices.
    Since $G_{2n+1}$ does not contain two triangles sharing an edge, $H$ must have minimal degree 3.
    As both $H$ and $G_{2n+1}$ are $(2,3)$-tight, $H$ is an induced subgraph of $G_{2n+1}$.

    First, we consider the case when $H$ contains no vertex in $\{4,5, \dots, 2n-2\}$,
    i.e.~the vertex set of $H$ is contained in $\{1,2,3,2n-1,2n,2n+1\}$.
    This implies that $H$ has a vertex of degree less than $2$:
    either $H$ contains one of $1,2n+1$ as a degree 2 vertex,
    or $H$ does not contain either of $1,2n+1$ and so contains one of $\{2,3,2n-1,2n\}$ as a degree 2 vertex.
    Thus $H$ contains a vertex $x \in\{4,5, \dots, 2n-2\}$.
    
    Since $H$ has minimal degree 3, $H$ must contain each neighbour of $x$. By repeating this argument, we see that $H$ must contain the vertices $4,5,\ldots, 2n-2$, as well as their neighbours.
    In particular, $H$ contains the neighbours of $4,5,2n-1$, and so $H$ contains the vertices $1,3,2n-1,2n+1$.
    Thus $H$ is formed from $G$ by either deleting $2$, deleting $2n$, or deleting both $2$ and $2n$.
    However, it is easy to check that each such graph has too few edges to be $(2,3)$-tight.
\end{proof}

\begin{lemma}
    For $n\geq 6$, and any edge $e\in E(G_{2n+1})$ and non-edge $f\in \binom{2n +1}{2} \setminus E(G_{2n+1})$, the graph $G_{2n+1} -e + f$ is not isomorphic to $G_{2n +1}$.
\end{lemma}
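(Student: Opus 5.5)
The plan is to compare degree sequences first, and then use a few simple structural invariants (triangles, and adjacencies between vertices of prescribed degrees) to rule out the few cases the degrees leave open. Reading off the edge list of $G_{2n+1}$:
\begin{itemize}[label={}, leftmargin=1em]
\item vertex $1$ has degree $n-1$ (its neighbours are $3,5,\dots,2n-1$);
\item vertex $2n+1$ has degree $n$ (its neighbours are $2,4,\dots,2n$);
\item vertices $3$ and $2n-1$ have degree $4$, and they are adjacent via the edge $(3,2n-1)$;
\item every other vertex has degree $3$.
\end{itemize}
So the degree multiset is $\{n,n-1,4,4,3^{(2n-3)}\}$. Here $|\deg(u)-\deg(v)|=1$ does occur (between $3$ and $4$, and between $n-1$ and $n$), so \Cref{lem:easy_acceptable} does not apply directly; we use it only as a guide. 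I also record two invariants of $G_{2n+1}$. It has exactly two triangles, namely $\{1,3,2n-1\}$ and $\{2,2n,2n+1\}$: consecutive path vertices $i,i+1$ cannot close a triangle with a hub, since odd vertices attach to $1$ and even ones to $2n+1$. Also, the two hubs $1$ and $2n+1$ are non-adjacent.

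Suppose $G' := G_{2n+1}-e+f \cong G_{2n+1}$, with $e=uv$ and $f=wz$. Passing from $G_{2n+1}$ to $G'$ lowers the degrees of $u,v$ and raises those of $w,z$, with cancellation at any common endpoint. Assume first that $n\geq 7$, so that $n-2$ and $5$ do not occur as degrees. Then a vertex can only lose degree if its degree is $4$ or $n$, and can only gain degree if its degree is $3$ or $n-1$. Moreover, the moves $4\to 3$ must be balanced by moves $3\to 4$, and the moves $n\to n-1$ by moves $n-1\to n$. Since the vertices of degree $4$ or $n$ are $3,2n-1,2n+1$, and $2n+1$ is adjacent to neither $3$ nor $2n-1$ (both odd), this leaves two cases.
\begin{enumerate}[label=(\alph*)]
\item $e$ and $f$ share an endpoint $u$. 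Either $v\in\{3,2n-1\}$ and $z$ is a degree-$3$ non-neighbour of $u$, or $v=2n+1$ and $z=1$.
\item $e=(3,2n-1)$ and $f$ joins two degree-$3$ vertices.
\end{enumerate}

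In case (b), the isomorphism must send $\{3,2n-1\}$ to the endpoints of $f$. These endpoints must be adjacent (through $f$) and must both be adjacent to the degree-$(n-1)$ vertex $1$, so they are odd vertices $i,j\in\{5,\dots,2n-3\}$. Now I would use the invariant that in $G_{2n+1}$ each degree-$4$ vertex has a neighbour in the triangle through the degree-$n$ vertex ($3\sim 2$ and $2n-1\sim 2n$). In $G'$, however, the neighbours $i\pm1, j\pm1$ lie in $\{4,\dots,2n-2\}$ and so avoid $\{2,2n\}$, giving a contradiction. In case (a) with $v=2n+1$, $z=1$, the hubs $1$ and $2n+1$ simply exchange degrees. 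We then compare the degree-$4$ vertices: in $G'$ they must be adjacent to the new degree-$n$ vertex $1$, which they are, but the triangle through the degree-$n$ vertex now has to contain two degree-$3$ vertices. Checking which triangles survive the removal of $u(2n+1)$ and the addition of $u1$ should give a contradiction, either through the number of triangles or through the adjacency of the hubs ($u$ is joined to both hubs in neither graph in the right way). In the remaining subcase of (a), $v\in\{3,2n-1\}$ loses its degree-$4$ status and $z$ gains it. The new degree-$4$ pair $\{w,z\}$ with $w$ the other vertex of $\{3,2n-1\}$ must be adjacent and lie in a common triangle with vertex $1$. That forces $z$ to be odd and adjacent to $w$, which can only happen if $z\in\{w\pm1\}$; but those vertices are even. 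Hence $z$ would have to be adjacent to $w$ via $f$, i.e.\ $u=w$ and $f=(w,z)$. I would then count triangles to finish.

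The main obstacle is the bookkeeping in case (a) and the small case $n=6$. For $n=6$ the coincidences $n-2=4$ and $n-1=5=4+1$ add extra degree moves: the degree-$5$ vertex $1$ can drop to $4$, and a degree-$4$ vertex can rise to $5$. I would handle $n=6$ by the same method, listing all degree-compatible pairs $(e,f)$ on the $13$-vertex graph and killing each with the triangle count (exactly two triangles, one through each hub), together with the non-adjacency of the two highest-degree vertices.
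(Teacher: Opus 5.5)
The overall strategy (degree bookkeeping first, then local invariants) is essentially the paper's, and your degree analysis for $n\ge 7$ and your case (b) argument are correct. Case (a), however, has genuine holes.

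First, in the subcase $v\in\{3,2n-1\}$ you deduce that $z$ is odd because it must be adjacent to $1$. This fails when $u=1$, since then $z$ is joined to $1$ by $f$ itself. For example, $e=(1,3)$, $f=(1,2n)$ preserves the degree sequence, and the new degree-$4$ pair $\{2n-1,2n\}$ is adjacent and lies in the triangle $\{1,2n-1,2n\}$. So your test passes, and this case is simply missing. The paper disposes of it by noting that $z$ would then be adjacent to both $1$ and $2n+1$, which no vertex of $G_{2n+1}$ is.

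Second, take $u=w$, say $w=3$, so $e=(3,2n-1)$ and $f=(3,z)$ with $z$ odd. Here ``count triangles'' does not finish the job. For $z\in\{7,\dots,2n-3\}$ the graph $G'$ has exactly the two vertex-disjoint triangles $\{1,3,z\}$ and $\{2,2n,2n+1\}$, with the same degree pattern as in $G_{2n+1}$. You need a further invariant. Your own case-(b) invariant works: the neighbours of $z$ avoid $\{2,2n\}$. So does the paper's $4$-cycle $(2,2n,2n-1,3)$ argument, after normalising $\phi(2)=2$, $\phi(2n)=2n$ via the reflection $i\mapsto 2n+2-i$.

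Third, the subcase $v=2n+1$, $z=1$ is left at ``should give a contradiction''. The two criteria you name for it (number of triangles, adjacency of the hubs) do not separate $u\in\{2,2n\}$, where $G'$ again has two triangles and non-adjacent hubs. Moreover, your remark that the degree-$4$ vertices must be adjacent to the new degree-$n$ vertex is backwards. In $G_{2n+1}$ they are adjacent to the degree-$(n-1)$ vertex, so in $G'$ they would have to be adjacent to $2n+1$, which they are not. That is the one-line contradiction.

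You are right that $n=6$ needs separate treatment. In fact the paper's proof asserts $v_e\in\{3,2n-1,2n+1\}$ for all $n\ge 6$. This overlooks that at $n=6$ vertex $1$ can drop from degree $5$ to $4$ while $3$ or $11$ rises to $5$. However, the invariants you propose for $n=6$ do not dispose of these extra cases.

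Take $e=(1,3)$, $f=(5,11)$ in $G_{13}$. The resulting $G'$ has:
the same degree sequence as $G_{13}$;
exactly two triangles, $\{2,12,13\}$ and $\{1,5,11\}$, one through each top-degree vertex $13$ and $11$;
and $11$, $13$ non-adjacent.
The same happens for $e=(1,7)$, $f=(7,3)$.

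What distinguishes these from $G_{13}$ is that the degree-$5$ and degree-$6$ vertices of $G'$ have common neighbours ($10,12$, respectively $2,4$). By contrast, $1$ and $13$ have no common neighbour in $G_{13}$. Add two invariants to your toolkit: ``the two top-degree vertices have no common neighbour'', and ``the degree-$4$ vertices are adjacent to the degree-$(n-1)$ vertex, and each has a neighbour in the triangle through the degree-$n$ vertex''. With these, every case closes, including the extra ones at $n=6$.
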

\begin{proof}
Fix $G=G_{2n+1}$ for brevity.
Suppose for contradiction that $G -e + f$ is isomorphic to $G$ for some choice of edge $e = \{u_e,v_e\}$ and non-edge $f=\{u_f,v_f\}$ where $v_e \neq v_f$.
Set $\phi:[2n+1] \rightarrow [2n+1]$ to be the graph isomorphism from $G$ to $G-e+f$.
The vertices of $G$ can be separated into four families depending on degree:
vertex $2n+1$ has degree $n$,
vertex $1$ has degree $n-1$,
vertices $3$ and $2n-1$ have degree $4$,
and all other vertices have degree $3$. 
As $n \geq 6$,
we have that $v_e \in \{3,2n-1,2n+1\}$,
or else the degree sequence of $G-e+f$ will not match the degree sequence of $G$.
We now go through the possible cases for $n\geq 6$. 

First, we suppose that $v_e=2n+1$.
So that the degree sequences agree, we must have (without loss of generality) that $v_f = 1$ and $u_e=u_f=u$ for some $u \in \{2,4,6,\ldots,2n-2,2n\}$.
However, no matter what our choice of $u$ is, $G-e+f$ contains two triangles sharing an edge which $G$ does not:
if $u=2$ then we have edge-sharing triangles $(1,2,3)$ and $(1,3,2n-1)$, and if $u >2$ then we have edge-sharing triangles $(1,u-1,u)$ and $(1,u,u+1)$.
Thus $v_e \neq 2n+1$, which in turn implies $v_f \neq 1$.
Additionally, $u_e \neq 2n+1$:
if $u_e \neq u_f$ then switching the labels of $u_e,v_e$ and applying the previous argument shows that $u_e \neq 2n+1$;
and if $u_e = u_f=2n+1$ then $v_e$ has degree 3 in $G$,
but this would force $\phi(v_e)$ to have degree 2 in $G-e+f$.

We now have that $v_e \in \{3,2n-1\}$.
It is clear that $\phi(2n+1) = 2n+1$ as $2n+1$ has the same degree in both $G$ and $G-e+f$.
By a similar argument, $\phi(1) = 1$.
So that $1$ and $2n+1$ have the correct neighbours in $G-e+f$, $\phi$ must preserve parity, in that it maps odd vertices to odd vertices and even vertices to even vertices.
As the only adjacent even vertices are $2,2n$,
we have that $\phi(2) \in \{2,2n\}$ and $\phi(2n) \in \{2,2n\}$.
If $u_e \neq u_f$, then (by relabelling) we see that $u_e \neq 2n+1$ also, and so $u_e \in \{3,2n-1\}$.
Since $2,2n$ are the unique vertices contained in a triangle with $2n+1$ and $\{u_e,v_e\} \neq \{2,2n\}$,
we have $\{\phi(2),\phi(2n)\} = \{2,2n\}$.
We now observe that $G$ has the following automorphism:
\begin{equation*}
    i \mapsto 
    \begin{cases}
        i &\text{if } i \in \{1,2n+1\},\\
        2n+2-i &\text{otherwise}.
    \end{cases}
\end{equation*}
Thus we may suppose that $\phi(2) = 2$ and $\phi(2n) = 2n$.
Since $u_e \neq 2n+1$,
we now have 8 possible cases to check, each ordered $(u_e,v_e)$:
\begin{equation*}
    (1,3), ~ (2,3), ~ (4,3),~ (2n-1,3),~ (1,2n-1), ~ (3,2n-1),~ (2n-2,2n-1), ~ (2n,2n-1).
\end{equation*}
We now check each possibility below:
\begin{itemize}
    \item Either $e=(1,3)$ and $e=(1,2n-1)$:
    To preserve the degree sequence, $u_e=u_f =1$.
    Thus the vertex $v_f$ is even. The vertex $v_f$ is now forced to be adjacent to both $1$ and $2n+1$ in $G-e+f$, but no vertex in $G$ is adjacent to both vertices.
    \item Either $e=(2,3)$ or $e=(4,3)$: We consider first the case where $e=(2,3)$. To preserve the degree sequence one has $u_e=u_f=2$ and $v_f$ has degree 3 in $G$.
    As the degree 4 vertices in $G$ are adjacent, $v_f$ must be adjacent to $2n-1$. The only possibilities are then $v_f=2n-2$ or $v_f =2n$. The edge $(2, 2n)$ is already an edge of $G$, and thus $f\neq (2,2n)$, while $f=(2, 2n-2)$ would create an additional triangle containing vertex $2n+1$ in $G$. In the case where $e=(4,3)$, one similarly derives from degree considerations and since $f$ must be adjacent to $2n-1$ that $f$ has to be $(4,2n-2)$ or $(4, 2n)$. Both of these edges would create an additional triangle containing $2n+1$ in the graph, which is impossible.
    \item Either $e=(2n-2,2n-1)$ or $e=(2n,2n-1)$. We first consider $e=(2n-2, 2n-1)$. By degree considerations, we must have $f=(2n-2, v_f)$ for some $v_f$, and $v_f$ must be adjacent to $3$. The only possibilities are $v_f=2$ or $v_f=3$, but these cases would create an additional triangle with vertex $2n+1$. For the edge $e=(2n, 2n-1)$, by degree considerations and adjacency in the graph one needs $f=(2n, v_f)$, and $v_f$ must be $2$ or $4$. The edge $2n, 2$ is already present in the graph, so this cannot be $f$, and the edge $(2n, 4)$ would create an additional triangle with vertex $2n+1$. 
    \item $\{u_e,v_f\} = \{3,2n-1\}$:
    Here we observe that $G$ contains a 4-cycle $(2,2n,2n-1,3)$, and so $(\phi(2),\phi(2n),\phi(2n-1),\phi(3)\}$ is a 4-cycle in $G-e+f$.
    The neighbours of $2$ in $G-e+f$ are $\{3,2n,2n+1\}$ and the $2n$ in $G-e+f$ are $\{2,2n-1,2n+1\}$. Since $\phi(2) = 2$, $\phi(2n) = 2n$ and $\phi(2n+1) = 2n+1$, we must have that $\phi(3) = 3$ and $\phi(2n-1)$.
    However, since $\{\phi(3),\phi(2n-1)\} = \{u_f,v_f\}$, this now implies $e=f$.
\end{itemize}
The cases all now provide the desired contradiction.
\end{proof}

Combining the results given in \Cref{subsec:even} and \Cref{subsec:odd} now gives the following result:

\begin{lemma}\label{lem:oddneven}
    For each $n \geq 13$,
    there exists an $\mathcal{R}_{2,n}$-acceptable graph with no vertex of degree $n-2$.
\end{lemma}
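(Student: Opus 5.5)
We split by the parity of $n$ and use the two constructions of the preceding subsections. In both cases we rely on the characterisation stated at the start of \Cref{subsec:even}: for $n \geq 4$, a graph on $[n]$ is $\mathcal{R}_{2,n}$-acceptable if and only if it is $(2,3)$-tight, has no proper $(2,3)$-tight subgraph on at least $4$ vertices, and $G-e+f \not\cong G$ for every edge $e$ and non-edge $f$. This characterisation comes from \Cref{thm:laman}: a $(2,3)$-tight graph $G$ plus any non-edge $f$ is a circuit exactly when no proper rigid subgraph on at least $4$ vertices blocks it.

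\emph{Even $n$.} The graph in \Cref{fig:bipartite} has $10$ vertices and satisfies conditions (i)--(iii) of \Cref{lem:1-ext}. Iterating \Cref{lem:1-ext} therefore gives bipartite graphs satisfying (i)--(iii) on every even number $n \geq 10$ of vertices, which covers all even $n \geq 14$.

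For each such graph we check acceptability as follows.
\begin{itemize}
\item Being a MIG gives $(2,3)$-tightness and the absence of proper $(2,3)$-tight subgraphs on at least $3$ vertices, hence certainly on at least $4$ vertices.
\item Condition (iii) together with \Cref{lem:easy_acceptable} gives $G-e+f\not\cong G$.
\end{itemize}
So the graph is $\mathcal{R}_{2,n}$-acceptable. It has no vertex of degree $n-2$ by the argument already given after \Cref{fig:bipartite}. A bipartite graph with minimum degree at least $2$ and a vertex of degree $n-2$ must be $K_{2,n-2}$, whose edge count $2(n-2)$ exceeds $2n-3$, so it is not $(2,3)$-tight.

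\emph{Odd $n$.} Write $n = 2m+1$ and use $G_{2m+1}$. For $m \geq 6$, i.e.\ $n\geq 13$:
\begin{itemize}
\item \Cref{lem:specialodd} gives $(2,3)$-tightness and no proper $(2,3)$-tight subgraph on at least $4$ vertices;
\item the subsequent lemma gives $G_{2m+1}-e+f \not\cong G_{2m+1}$.
\end{itemize}
Hence $G_{2m+1}$ is $\mathcal{R}_{2,n}$-acceptable. The degrees, as listed in the proof of that lemma, are $m$, $m-1$, $4$ and $3$. We need all of these to differ from $n-2 = 2m-1$. For $m\geq 6$ we have $m < 2m-1$ and $4 < 2m-1$, so no vertex has degree $n-2$.

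Combining the two cases covers every $n \geq 13$. The only point needing care is confirming that the acceptability criterion really reduces to the three combinatorial conditions, i.e.\ that condition (ii) of \Cref{lem:circuit_condition} follows from the absence of proper rigid subgraphs on at least $4$ vertices. The argument: by \Cref{thm:laman}, $E+f$ contains a unique circuit, and that circuit is all of $E+f$ precisely when no proper $(2,3)$-tight subgraph spans both ends of $f$. Any $(2,3)$-tight subgraph containing both ends of the non-edge $f$ has at least $3$ vertices, and with $3$ vertices it would be a triangle containing $f$, which is impossible since $f\notin E$. So only subgraphs on at least $4$ vertices matter. This step is already stated in the paper, so I expect no real obstacle beyond bookkeeping of the ranges of $n$.
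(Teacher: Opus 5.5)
Your proposal is correct and follows the paper's argument: the bipartite family from \Cref{lem:1-ext}, grown from the graph in \Cref{fig:bipartite}, covers even $n \geq 10$. The graphs $G_{2m+1}$ with $m \geq 6$ cover odd $n \geq 13$ via \Cref{lem:specialodd} and the non-isomorphism lemma, with \Cref{lem:easy_acceptable} handling condition (iii) in the even case.

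One small slip: $K_{2,n-2}$ has $2(n-2) = 2n-4$ edges, which is \emph{fewer} than $2n-3$, not more. The conclusion that it is not $(2,3)$-tight still holds. Alternatively, each iteration adds two vertices and raises the maximum degree by one, so the even graphs have maximum degree $n/2 < n-2$.
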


Interestingly, each graph $G_{2n+1}$ is base of the 2-hyperconnectivity matroid.

\begin{figure}
\begin{center}
\begin{tikzpicture}[scale=0.5]
\coordinate (v1)    at (-2,-2);
\coordinate (vlast) at ( 2,-2);

\foreach \name/\angle in {
    2/180, 3/145, 4/90, b/35, c/0
}{
    \coordinate (v\name) at (\angle:3);
}

\draw[edge] (v2) -- (v3) -- (v4) -- (vb) -- (vc);

\draw[bedge] (v2) -- (vc);   
\draw[bedge] (v3) -- (vb);   

\foreach \target in {3,b}{
    \draw[oedge] (v1) -- (v\target);
}

\foreach \target in {2,4,c}{
    \draw[pedge] (vlast) -- (v\target);
}

\node[vertex, label=below:{$1$}]       at (v1)    {};
\node[vertex, label=below:{$2n+1$}]    at (vlast) {};

\foreach \name/\angle/\lab in {
    2/180/2,
    3/145/3,
    4/90/4,
    b/35/{2n-1},
    c/0/{2n}
}{
    \node[vertex, label={\angle:{$\lab$}}] at (v\name) {};
}
\end{tikzpicture}\qquad
\begin{tikzpicture}[scale=0.5]
\coordinate (vlast) at ( 2,-2);

\foreach \name/\angle in {
    2/180, 3/145, 4/90, b/35, c/0
}{
    \coordinate (v\name) at (\angle:3);
}

\draw[edge] (v2) -- (v3) -- (v4) -- (vb) -- (vc);

\draw[bedge] (v2) -- (vc);   
\draw[bedge] (v3) -- (vb);   

\foreach \target in {2,4,c}{
    \draw[pedge] (vlast) -- (v\target);
}

\node[vertex, label=below:{$2n+1$}]    at (vlast) {};

\foreach \name/\angle/\lab in {
    2/180/2,
    3/145/3,
    4/90/4,
    b/35/{2n-1},
    c/0/{2n}
}{
    \node[vertex, label={\angle:{$\lab$}}] at (v\name) {};
}
\end{tikzpicture}
\end{center}
\caption{(Left) The graph $G_{2n+1}$ for $n=3$. (Right) The graph $H$ formed from $G_7 = G_{2n+1}$ by deleting the vertex 1.}
\end{figure}

\begin{proposition}\label{prop:hypconn}
    For $n \geq 3$,
    the graph $G_{2n+1}$ is a base of the 2-hyperconnectivity matroid.
\end{proposition}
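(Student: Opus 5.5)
The plan is induction on $n$, with the base case $n=3$ checked directly and each step from $G_{2n+1}$ to $G_{2n+3}$ realised as two consecutive $1$-extensions that keep $2|V|-3$ edges.

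\textbf{Edge count.} First check that $G_{2n+1}$ has the right number of edges. It has $2n-2$ path edges, $2$ blue edges, $n-1$ orange edges and $n$ purple edges, giving $4n-1 = 2(2n+1)-3$ in total. This is exactly the rank of $\mathcal{H}_{2,2n+1}$, so it suffices to prove that $E(G_{2n+1})$ is independent in $\mathcal{H}_{2,2n+1}$. Equivalently, the $(4n-1)\times 2(2n+1)$ matrix $H(p)$ restricted to these rows has full row rank for some (hence generic) $p$. We cannot peel off vertex $1$ by \Cref{lem:0-ext}, because it has degree $n-1$, so we argue inductively instead.

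\textbf{Base case.} For $n=3$ the graph $G_7$ has $7$ vertices and $11$ edges. We will exhibit an explicit (non-generic, integer) placement $p$ for which the $11\times 14$ matrix $H(p)$ has rank $11$. Full rank at one placement implies full rank generically, since the rank of $H(p)$ can only increase under generic perturbation. To make this computation manageable, we will first use \Cref{lem:0-ext} to strip off degree-$2$ vertices of $G_7$ where possible.

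\textbf{Inductive step.} $G_{2n+3}$ is obtained from $G_{2n+1}$, up to relabelling, by deleting a path edge $uv$ with $u$ even, $v$ odd and $u,v\notin\{2,3,2n-1,2n\}$ (for instance $uv=45$), then inserting two new path vertices $x,y$. Concretely:
\begin{enumerate}
  \item Delete $uv$ and add a vertex $x$ joined to $u$, $v$ and $1$.
  \item Delete $xv$ and add a vertex $y$ joined to $x$, $v$ and $2n+1$.
\end{enumerate}
Each step is a $1$-extension, so it suffices to show that a $1$-extension in which the new vertex $x$ is joined to the endpoints $u,v$ of the deleted edge and to one further vertex $w$ preserves independence in $\mathcal{H}_2$.

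\textbf{Main obstacle: $1$-extensions in $\mathcal{H}_2$.} I expect this lemma to be the hardest part, because the usual collinearity trick from rigidity theory does not transfer verbatim to the hyperconnectivity matrix. My first attempt is a limiting argument. Place $p_x = tq$ and let $t\to 0$. At $t=0$ the three rows $xu,xv,xw$ are supported only on block $x$, with entries $p_u,p_v,p_w\in\mathbb{R}^2$. Take the linear relation $ap_u+bp_v+cp_w=0$. The combination $a\,(xu)+b\,(xv)+c\,(xw)$ divided by $t$ equals $-aq$ on block $u$, $-bq$ on block $v$ and $-cq$ on block $w$. We then want to choose $q$ and the other positions so that this row, together with two rows spanning block $x$, spans the same space modulo the old rows as the deleted row $uv=(p_v,-p_u)$ does.

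If matching $uv$ exactly is impossible, I will fall back on the following. Since we are free to choose the extra neighbour $w$ (it is $1$ or $2n+1$, which have large degree), I would first check whether the new row can instead be made to coincide with a row that is independent of the old rows for structural reasons. Failing that, I will verify the $1$-extension step directly on the explicit local structure of $G_{2n+1}$ near $u,v,1,2n+1$, by computing a suitable minor of $H(p)$.
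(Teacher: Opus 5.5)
Your outer structure matches the paper's: the edge count $4n-1$, the base case $n=3$, and an inductive step that deletes the path edge $45$, inserts two new path vertices $x,y$, and joins them to the hubs $1$ and $2n+1$. The paper uses this very two-vertex move.

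\textbf{The gap.} The problem is how you justify that move. You reduce it to the claim that an arbitrary 1-extension preserves independence in $\mathcal{H}_2$, and that claim is false. Take $K_{2,3}$ plus an edge $b_1b_2$ inside its 3-side. It is $K_4-e$ with a degree-2 vertex $b_3$ attached, so it is independent in every abstract 2-rigidity matroid by repeated use of \Cref{lem:0-ext}. Yet the 1-extension on $b_1b_2$ with third neighbour $b_3$ yields $K_{3,3}$, which is a circuit of $\mathcal{H}_2$ by \Cref{lem:bipcirchyp}.

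So your limiting argument cannot succeed in general. With $p_x=tq\to 0$ and generic positions elsewhere, the limit matrix has full rank if and only if $aX_u+bX_v+cX_w\neq 0$. Here $X$ is the non-trivial kernel vector of the rows of $G-uv$, unique modulo trivial ones and scaling. The trivial kernel vectors $X_i=Sp_i$, with $S$ symmetric, always give $0$, and nothing forces the non-trivial one not to. Your fallbacks (choosing $w$ cleverly, or computing a suitable minor near $u,v,1,2n+1$) do not give an argument valid for all $n$.

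\textbf{The missing idea.} Treat the two 1-extensions as a single operation. The paper does this and cites Garamv\"{o}lgyi's Lemma 2.1(b) for it. The move also has a short direct proof by coincident positions:
\begin{enumerate}
\item Keep $p$ generic on $V(G_{2n+1})$ and set $p_x:=p_v$, $p_y:=p_u$.
\item A row dependency of $H(p)$ is a skew-symmetric $\omega$ on the edges with $\sum_j\omega_{ij}p_j=0$ at every vertex $i$.
\item At $x$ this reads $(\omega_{xu}+\omega_{xy})p_u+\omega_{x1}p_1=0$, so $\omega_{x1}=0$ and $\omega_{xy}=\omega_{ux}$.
\item Likewise at $y$ one gets $\omega_{y,2n+1}=0$ and $\omega_{yv}=\omega_{xy}$.
\item At $u$ the term $\omega_{ux}p_x=\omega_{ux}p_v$, and at $v$ the term $\omega_{vy}p_y=-\omega_{ux}p_u$, are exactly the contributions of an edge $uv$ with coefficient $\omega_{ux}$.
\item Hence $\omega$ folds into a row dependency of $G_{2n+1}$ at a generic placement, which is zero by induction.
\end{enumerate}
Therefore $\omega=0$, the rows are independent at this special $p$, and hence they are independent generically.

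Applying the same move to $K_4-e$ produces the triangular prism $G_7-1$. Adding back vertex $1$ via \Cref{lem:0-ext} then settles your base case without any computation.

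\textbf{A minor point.} Your condition $u,v\notin\{2,3,2n-1,2n\}$ leaves no admissible edge when $n=3$. This does not matter in practice, since $uv=45$ works for every $n\ge 3$.
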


\begin{proof}
    First, fix $n=3$.
    As can be seen in from the left-hand graph in \Cref{fig:bipartite},
    the vertex 1 has degree 2 in $G_{7}$.
    Let $H$ be the graph formed from $G_7$ by deleting the vertex 1.
    Since $H$ a cubic graph not isomorphic to $K_4$ or $K_{3,3}$, it is a base of the 2-hyperconnectivity matroid (see \cite[Theorem 1.8]{tyomkyn2026planerigiditymatroids}).
    As independence in the $d$-hyperconnectivity matroid is preserved when a new degree $d$ vertex is appended to the graph (\cite[Lemma 2.1(b)]{garamvölgyi2026}),
    $G_7$ is a base of the 2-hyperconnectivity matroid.

    Now fix $n >3$ and suppose that the result holds for all $k \leq n$.
    Here we observe that $G_{2n+1}$ can be formed from $G_{2n-1}$ by the following operation:
    delete the edge $(3,4)$, then add two new vertices $a,b$ and edges $(a,b)$, $(a,4)$, $(a,2n+1)$, $(b,3)$, $(b,1)$.
    As this operation also preserves independence in the 2-hyperconnectivity matroid (\cite[Lemma 2.1(b)]{garamvölgyi2026}),
    it follows that $G_{2n+1}$ is a base of the 2-hyperconnectivity matroid.
\end{proof}

\subsection{Proof of \texorpdfstring{\Cref{mainthm}}{main theorem} and corollaries}

We are now ready to prove \Cref{mainthm}, \Cref{maincor} and \Cref{maincor2}.

\begin{proof}[Proof of \Cref{mainthm}]
    By \Cref{lem:oddneven} and \Cref{lem:acceptable_cone},
    for each $d \geq 2$ and each $n \geq  d +11$ there exists a graph that is $\mathcal{R}_{d,n}$-acceptable.
    The result now follows from applying \Cref{lem:circuit_condition}.
\end{proof}

\begin{proof}[Proof of \Cref{maincor}]
    Fix $d \geq 2$.
    For each $n \geq d+11$,
    fix $G_n$ to be the $n$-vertex graph given by \Cref{mainthm} for our chosen integers $d,n$,
    and fix the symmetric abstract $d$-rigidity matroid $M_n := \mathcal{M}_{G_n}(\mathcal{R}_{d,n})$.
    By design, we have $M_n \prec \mathcal{R}_{d,n}$ for each $n \geq d+ 11$.
    If $d \in \{2,3\}$ then, as $n >6$, we have $M_n \prec \mathcal{R}_{d,n} \preceq \mathcal{C}_{d,n}$ (\Cref{thm:maximalityconj}), and so $M_n \neq \mathcal{C}_{d,n}$.
    The restriction of each $M_n$ to $K_{n-1}$ is exactly $\mathcal{R}_{d,n}$.
    This implies two things for each $n \geq d+11$:
    (i) if $n > 2d+2$,
    then every copy of $K_{d+1,d+1}$ is independent in $M_n$ (\Cref{lem:bipcircrigsmall}); and
    (ii) if $n > 2d+4$,
    then every copy of $K_{d+2,d+2}$ is dependent in $M_n$ (\Cref{lem:bipcircrig}).
    The first implication means that $M_n \neq \mathcal{H}_{d,n}$ when $n >2d+2$ (\Cref{lem:bipcirchyp}), and the second implication means that $M_n \neq \mathcal{C}_{d,n}$ when $n >2d+4$ and $d \geq 4$ (\Cref{lem:bipcirccof}).
    Thus if $n \geq 2d+9 \geq  \max\{ d+11, 2d+5\}$, then $M_n \notin \{\mathcal{R}_{d,n}, \mathcal{H}_{d,n},\mathcal{C}_{d,n}\}$.
\end{proof}

\begin{proof}[Proof of \Cref{maincor2}]
    Fix $n \geq 13$.
    By \Cref{mainthm},
    there exists a graph $G=([n],E)$ where $E$ is a basis of $\mathcal{R}_{2,n}$ and $\mathcal{M}_{G}(\mathcal{R}_{2,n})$ is a symmetric abstract 2-rigidity matroid.
    Suppose for the sake of contradiction that $\mathcal{M}_{G}(\mathcal{R}_{2,n}) = \mathcal{K}_n$ for some abstract 2-rigidity matroid family $\mathcal{K} = (\mathcal{K}_n)_{n \geq d}$.
    As $n >6$, $K_{3,3}$ is independent in $\mathcal{M}_{G}(\mathcal{R}_{2,n})$, and thus $K_{3,3}$ is also independent $\mathcal{K}$.
    By \Cref{thm:tyomkyn}, $\mathcal{K} = \mathcal{R}_2$.
    However, this now implies that $E$ is independent in $\mathcal{K}_n = \mathcal{M}_{G}(\mathcal{R}_{2,n})$,
    which contradicts our construction.
\end{proof}

\section{Open problems}\label{sec:further}

\subsection{Extending main results to hyperconnectivity and cofactor matroids}

The values of $n$ for which \Cref{mainthm}, \Cref{maincor} and \Cref{maincor2} have not been optimised, so it is likely a tight result can easily be stated in each case with respect to $n$.
As observed by Tyomkin \cite{tyomkyn2026planerigiditymatroids},
there are no $\mathcal{R}_{2,7}$-acceptable graphs;
of the 70 $(2,3)$-tight graphs with 7 vertices \cite{capco_2018}, exactly 2 contain no $(2,3)$-tight proper subgraph with four or more vertices\footnote{The two graphs are 56155 and 35507 on House of Graphs \cite{houseofgraphs}: \href{https://houseofgraphs.org/}{https://houseofgraphs.org/}.} and both fail condition (iii) of \Cref{lem:circuit_condition}.
The remaining cases to check for $d=2$ are for $n \in \{8,9,11\}$.

It seems plausible that \Cref{mainthm} remains true if we switch $\mathcal{R}_d$ with either $\mathcal{H}_d$ or $\mathcal{C}_d$.
There are, however, a few road-blocks to doing so:
\begin{itemize}
    \item \textbf{Hyperconnectivity matroids:} It is known that independent sets and circuits of the hyperconnectivity matroid are preserved under coning \cite[Cor.~5.2]{Kalai1985}\footnote{While Kalai's paper states without proof that coning preserves circuits in the hyperconnectivity matroid, the proof of \Cref{lem:rigidcone2} given in \cite[Lem.~5.3]{Garamvölgyi_Gortler_Jordán_2022} can be easily adapted to prove this result. An alternative proof can be found in \cite[Prop.~7(b)]{bulavka2026maximalmatroidscounterexamples}.}, thus \Cref{lem:acceptable_cone} can easily extended to $\mathcal{H}_d$. 
    There are two issues with the constructed graphs described in the proof of \Cref{mainthm}.
    Firstly, the even-order constructed graphs in \Cref{mainthm} are all bipartite graphs, and thus cannot be bases of the 2-hyperconnectivity matroid on $K_n$ (see \cite[Thm 6.1]{Kalai1985}).
    Secondly, although the odd-order constructed graphs given in \Cref{subsec:odd} are bases of 2-hyperconnectivity matroid (\Cref{prop:hypconn}),
    it is unclear if said graphs are $\mathcal{H}_{2,n}$-acceptable since the circuits of 2-hyperconnectivity matroid have no simple characterisation.
    \item \textbf{Cofactor matroids:} 
    Unlike for the 2-hyperconnectivity matroid,
    the constructed graphs given in \Cref{mainthm} are bases of the 2-cofactor matroid on $K_n$.
    However, it is currently not known whether coning preserves circuits for cofactor matroids (although coning does preserve independence for cofactor matroids \cite[Thm 11.3.3]{Whiteley1996}), and so we cannot extend the result into values of $d$ where $\mathcal{R}_{d,n} \neq \mathcal{C}_{d,n}$.
    It is worth noting here that the proof of \Cref{lem:rigidcone2} given in \cite[Lem.~5.3]{Garamvölgyi_Gortler_Jordán_2022} cannot directly be adapted to the cofactor case.
\end{itemize}

We conjecture that \Cref{maincor2} is true in all dimensions higher than 1.

\begin{conjecture}
    For each $d \geq 2$, there are infinitely many positive integers $n$ for which there exists a symmetric abstract $d$-dimensional rigidity matroid on $K_n$ that is not contained in any abstract $d$-rigidity matroid family.
\end{conjecture}

\subsection{Realisability of constructed matroids}

It was recently proven by Tyomkyn \cite{tyomkyn2026linearmatroidproductssynthetic} that every linearly representable abstract 2-rigidity matroid is the row matroid of some matrix $R(p)$ (as defined in \Cref{subsec:rigiditymatroid}) with possibly non-generic choice of $p$.
Using this, we can prove the following.

\begin{proposition}\label{prop:prism}
    Given $G$ is the triangular prism pictured in \Cref{fig:3prism}, the matroid $\mathcal{M}_{G}(\mathcal{R}_{2,6})$ is not linearly representable over any field of characteristic 0.
\end{proposition}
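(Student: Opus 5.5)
The plan is to argue by contradiction, using Tyomkyn's representation theorem \cite{tyomkyn2026linearmatroidproductssynthetic} to turn the matroid into a planar point configuration, and then to show that this configuration would have to contain a Fano configuration, which is impossible in characteristic $0$.

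Write $M := \mathcal{M}_G(\mathcal{R}_{2,6})$ and suppose $M$ is linearly representable over a field of characteristic $0$. By \Cref{lem:circuit_condition}, $M$ is an abstract 2-rigidity matroid. Tyomkyn's theorem then gives $M$ as the row matroid of $R(p)$ for some (possibly non-generic) configuration $p=(p_1,\dots,p_6)$ of points in $\mathbb{F}^2$, where $\mathbb{F}$ has characteristic $0$; passing to the algebraic closure if needed does no harm. It is convenient to work in the projective plane over $\mathbb{F}$, so that ``parallel'' becomes ``concurrent at infinity''.

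\textbf{Step 1: non-degeneracy of $p$.} Every edge set on at most $5$ vertices, and more generally every non-spanning edge set, has the same independence status in $M$ as in $\mathcal{R}_{2,6}$. In particular, every triangle and every single edge is independent in $M$. Hence no two of the points $p_i$ coincide, since otherwise the row of that edge is zero. Also no three of the points are collinear, since otherwise the corresponding triangle has dependent rows in $R(p)$.

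\textbf{Step 2: translate prism dependence.} Every copy of the triangular prism is a circuit of $M$, so it is dependent in $R(p)$. I will use the classical fact, provable by a short direct computation with stresses, that for a prism with triangles $abc$ and $a'b'c'$, each triangle being non-degenerate, $R(p)$ restricted to the prism is dependent if and only if the three lines $p_ap_{a'}$, $p_bp_{b'}$, $p_cp_{c'}$ are concurrent in the projective plane. Every perfect matching $\{aa',bb',cc'\}$ of $[6]$ arises as the set of rungs of some copy of the prism, namely the one with triangles $abc$ and $a'b'c'$. Therefore for each of the $15$ perfect matchings of $[6]$, the three joining lines are concurrent.

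\textbf{Step 3: a Fano configuration.} Consider the three matchings that contain the pair $56$: $\{12,34,56\}$, $\{13,24,56\}$ and $\{14,23,56\}$. Let $P=p_1p_2\cap p_3p_4$, $Q=p_1p_3\cap p_2p_4$ and $R=p_1p_4\cap p_2p_3$. These are the three diagonal points of the quadrangle $p_1p_2p_3p_4$, which is a genuine quadrangle by Step 1. Step 2 says that the line $p_5p_6$ passes through each of $P$, $Q$ and $R$, so the three diagonal points are collinear. By the standard Fano-plane fact, the diagonal points of a quadrangle in a projective plane over a field are collinear only in characteristic $2$. One can check this directly by sending $p_1,\dots,p_4$ to $[1{:}0{:}0],[0{:}1{:}0],[0{:}0{:}1],[1{:}1{:}1]$; the determinant condition then reduces to $2=0$. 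This contradicts characteristic $0$.

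\textbf{Main obstacle.} The delicate step is Step 2: verifying the concurrency criterion for prism dependence without a genericity assumption, including the projective case where the rungs are parallel. Before relying on it I would write down an explicit stress computation. I would also check that the statement of Tyomkyn's theorem really keeps the field of characteristic $0$, or at least within an extension of the same characteristic. Everything else is routine.
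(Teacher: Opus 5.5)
Your proof is correct. After the setup you share with the paper, you finish by a genuinely different route. The shared setup is: Tyomkyn's theorem; no collinear triples, because triangles are independent; and the two-rigid-bodies criterion, which turns each copy of the prism into a concurrency of its three rung lines. That criterion holds over any field once both triangles are non-degenerate, since each rung imposes on the $3$-dimensional space of relative motions the functional given by the homogeneous coordinates of its line. So the step you flagged as the main obstacle is fine.

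The paper normalises projectively to $p(a)=(0,0)$, $p(a')=(1,0)$, $p(b)=(0,1)$, $p(b')=(1,1)$, $p(c)=(x,y)$, $p(c')=(x',y)$. It then extracts coordinate constraints from the re-matchings $\{ab',a'b,cc'\}$, $\{aa',bc',b'c\}$ and $\{ac',bb',a'c\}$. You instead take the three matchings through the fixed pair $56$, which forces the diagonal points of the quadrangle $p_1p_2p_3p_4$ onto the line $p_5p_6$. Your version has three advantages:
\begin{itemize}
\item it is coordinate-free apart from the standard-frame check;
\item it applies a projective map only to a pure incidence statement, so it never meets the possibility that the paper's normalising map sends $p(c)$ or $p(c')$ to infinity, which the paper silently ignores;
\item it shows the argument only uses that the characteristic is not $2$.
\end{itemize}

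Your choice of matchings is also what actually produces a contradiction. In the paper's coordinates the first two constraints give $y=1/2$ and $x'-x+1=1/2$. So the intersection point $\frac{1}{x'-x+1}(x',y)$ in the third bullet is $(2x',1)$, which does lie on $p(b)p(b')$. For example, $p(c)=(2,1/2)$ and $p(c')=(3/2,1/2)$ satisfy all four concurrencies used there, with no three points collinear. A further matching is needed, such as $\{ab,a'b',cc'\}$; this is your $\{13,24,56\}$ under $(a,a',b,b',c,c')=(1,\dots,6)$. It fails at that configuration because $p(a)p(b)$ and $p(a')p(b')$ are vertical while $p(c)p(c')$ is horizontal.

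Finally, when you cite \Cref{lem:circuit_condition}, briefly note that the prism is $\mathcal{R}_{2,6}$-acceptable. This is quick: it is $3$-regular, so no $G-e+f$ has its degree sequence, and it has no proper $(2,3)$-tight subgraph on $4$ or $5$ vertices.
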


\begin{proof}
    Let $G$ be labelled as in \Cref{fig:3prism}.
    Suppose for contradiction that $\mathcal{M}_{G}(\mathcal{R}_{2,6})$ is linearly representable over the characteristic 0 field $\mathbb{F}$.
    By \cite[Thm 1.5]{tyomkyn2026linearmatroidproductssynthetic},
    there exists a vector $p \in (\mathbb{F}^2)^6$ so that $\mathcal{M}_{G}(\mathcal{F}_{2,6})$ is the row matroid of $R(p)$ (here we extend our definition of $R(p)$ to allow for entries in $\mathbb{F}$).
    As $K_3$ is independent in any abstract 2-rigidity matroid,
    the points $p(i),p(j),p(k)$ are not collinear for any three distinct vertices $i,j,k \in [6]$.
    It is a classical result that, if both the triangles $(a,b,c)$, $(a',b',c')$ are not collinear, then $G$ is dependent in the row matroid of $R(p)$ if and only if the three lines $[p(a),p(a')]$, $[p(b),p(c')]$, $[p(b'),p(c)]$ are either parallel or intersect at a point;
    see for example \cite[Tab.~1.3]{whitewhiteley}.
    As projective transformations of $p$ do not effect the row matroid of $R(p)$ (see \cite[Thm 1]{Izmestiev2009}),
    we may suppose that 
    \begin{gather*}
        p(a) = (0,0), \qquad p(a') = (1,0), \qquad p(b) = (0,1), \qquad p(b') = (1,1), \\  p(c) = (x , y), \qquad p(c') = (x', y).
    \end{gather*}
    So that we do not have a collinear quadruple of points,
    we must have that $y \notin \{0,1\}$.

    For any permutation $\sigma$ of $[n]$, the graph $\sigma G$ is a circuit of the row matroid of $R(p)$.
    Added to the exact characterisation of when a realisation of the triangular prism is dependent,
    we obtain the following geometric constraints for $p$ from the following permutations:
    \begin{itemize}
        \item Switch $a \leftrightarrow b$ and $a' \leftrightarrow b'$: the point $(1/2,1/2)$ (i.e.~the intersection of $[p(a),p(b')]$, $[p(b),p(a')]$) lies on the line $[p(c),p(c')]$.
        Thus $y=1/2$.
        \item Switch $b \leftrightarrow c$ and $b' \leftrightarrow c'$: the lines $[p(a),p(a')]$, $[p(b),p(c')]$ and $[p(c),p(b')]$ intersect at a point $(t,0)$ for some $t \in \mathbb{R}$, or the lines are parallel.
        The lines $[p(b),p(c')]$ and $[p(c),p(b')]$ intersect at the unique point
        \begin{equation*}
            \frac{1}{x' - x +1} ( x', y+x'-x)
        \end{equation*}
        if and only if $x' - x +1 \neq 0$,
        otherwise they are parallel.
        However, if $x' = x-1$, then the lines $[p(b),p(c')]$ and $[p(c),p(b')]$ intersect $[p(a),p(a')]$ at 
        \begin{equation*}
            \left(\frac{x-1}{1-y}~,~0 \right), \qquad  \left(1+\frac{x-1}{1-y}~,~0 \right)
        \end{equation*}
        respectively, contradicting that the three lines are parallel.
        Thus $x' - x +1 \neq 0$ and (as the second coordinate of the intersection must be 0) $y = x- x'$.
        \item Switch $a \leftrightarrow c$ and $a' \leftrightarrow c'$: the lines $[p(a),p(c')]$, $[p(b),p(b')]$ and $[p(c),p(a')]$ intersect at a point $(t,1)$ for some $t \in \mathbb{R}$,
        or the lines are parallel.
        We now compute that the lines intersect at the point
        \begin{equation*}
            \frac{1}{x' - x +1} ( x', y)
        \end{equation*}
        if it exists.
        As $x'-x+1 = 1/2 \neq 0$,
        the intersection exists and is exactly the point $((2x-1)/3,1/3)$.
        However, this now contradicts that the second coordinate must be 1.
    \end{itemize}
    As the geometric constraints are not consistent, we obtain our desired contradiction.
\end{proof}

Because of this, we believe the following holds.

\begin{conjecture}
    Let $G=([n],E)$ be a graph where $\mathcal{M}_G(\mathcal{R}_{2,n})$ is an abstract 2-rigidity matroid.
    If $G$ is not isomorphic to $K_{3,3}$,
    then $\mathcal{M}_G(\mathcal{R}_{2,n})$ is not linearly realisable over any field of characteristic 0.
\end{conjecture}

\addcontentsline{toc}{section}{Acknowledgements}
\section*{Acknowledgements}

S.\,D.\ was supported by the FWO grants G0F5921N (Odysseus) and G023721N, and by the KU Leuven grant iBOF/23/064.
The authors would like to thank D\'{a}niel Garamv\"{o}lgyi for his insights into abstract rigidity matroids and cofactor matroid coning, as well as his helpful comments on an earlier draft of the paper.
The authors would also like to thank Mykhaylo Tyomkyn for his helpful comments on an earlier draft of the paper.

\bibliographystyle{plainurl}
\bibliography{ref}

\end{document}